\documentclass{article}
\usepackage{graphicx} 
\usepackage{geometry}
\geometry{a4paper,left=2cm,right=2cm,top=2cm,bottom=2cm}
\usepackage{amsmath,amssymb,amsthm}
\usepackage{algorithm,algorithmic}
\usepackage{hyperref}
\usepackage{subcaption}
\usepackage[titletoc]{appendix}
\usepackage{xcolor}

\allowdisplaybreaks[4]

\newtheorem{theorem}{Theorem}[section]
\newtheorem{proposition}[theorem]{Proposition}
\newtheorem{corollary}[theorem]{Corollary}
\newtheorem{lemma}[theorem]{Lemma}
\newtheorem{remark}{Remark}

\newcommand{\tf}{\tilde{f}}
\newcommand{\R}{\mathbb{R}}
\newcommand{\rd}{\mathrm{d}}
\newcommand{\bv}{{\boldsymbol{v}}}
\newcommand{\bV}{{\boldsymbol{V}}}
\newcommand{\bx}{{\boldsymbol{x}}}
\newcommand{\bs}{{\boldsymbol{s}}}
\newcommand{\tbs}{\tilde{\boldsymbol{s}}}
\newcommand{\bT}{\boldsymbol{T}}
\newcommand{\bU}{\boldsymbol{U}}
\newcommand{\bz}{\boldsymbol{z}}
\newcommand{\bu}{\boldsymbol{u}}
\newcommand{\bbT}{\mathbb{T}}

\title{A score-based particle method for homogeneous Landau equation \footnote{ This work is support in part by NSF grant DMS-1846854, UMN DSI-SSG-4886888864, and The Simons Fellowship.}}
\author{Yan Huang \thanks{School of Mathematics, University of Minnesota Twin Cities, Minneapolis, MN 55455, USA. (huan2728@umn.edu)} and Li Wang \thanks{School of Mathematics, University of Minnesota Twin Cities, Minneapolis, MN 55455, USA. (liwang@umn.edu)} }

\date{}

\begin{document}

\maketitle

\begin{abstract}
We propose a novel score-based particle method for solving the Landau equation in plasmas, that seamlessly integrates learning with structure-preserving particle methods \cite{carrillo2020particle}. Building upon the Lagrangian viewpoint of the Landau equation, a central challenge stems from the nonlinear dependence of the velocity field on the density.  Our primary innovation lies in recognizing that this nonlinearity is in the form of the score function, which can be approximated dynamically via techniques from score-matching. The resulting method inherits the conservation properties of the deterministic particle method while sidestepping the necessity for kernel density estimation in \cite{carrillo2020particle}. This streamlines computation and enhances scalability with dimensionality. Furthermore, we provide a theoretical estimate by demonstrating that the KL divergence between our approximation and the true solution can be effectively controlled by the score-matching loss. Additionally, by adopting the flow map viewpoint, we derive an update formula for exact density computation. Extensive examples have been provided to show the efficiency of the method, including a physically relevant case of Coulomb interaction.
\end{abstract}

\section{Introduction}
The Landau equation stands as one of the fundamental kinetic equations, modeling the evolution of charged particles undergoing Coulomb interaction \cite{landau1958kinetic}. It is particularly useful for plasmas where collision effects become non-negligible. Computing the Landau equation presents numerous challenges inherent in kinetic equations, including high dimensionality, multiple scales, and strong nonlinearity and non-locality. On the other hand, deep learning has progressively transformed the numerical computation of partial differential equations by leveraging neural networks' ability to approximate complex functions and the powerful optimization toolbox. However, straightforward application of deep learning to compute PDEs often encounters training difficulties and leads to a loss of physical fidelity. In this paper, we propose a score-based particle method that elegantly combines learning with structure-preserving particle methods. This method inherits the favorable conservative properties of deterministic particle methods while relying only on light training to dynamically obtain the score function over time. The learning component replaces the expensive density estimation in previous particle methods, drastically accelerating computation.

In general terms, the Landau equation takes the form 
\begin{equation*} 
    \partial_t \tf_t + \bv \cdot \nabla_\bx \tf_t + E \cdot \nabla_\bv \tf_t = Q(\tf_t, \tf_t) := \nabla_\bv \cdot \left[\int_{\R^d} A(\bv-\bv_*) \left( \tf_t(\bv_*) \nabla_\bv \tf_t(\bv) - \tf_t(\bv) \nabla_{\bv_*} \tf_t(\bv_*) \right) \rd \bv_*\right] \,,
\end{equation*}
where $\tf_t(\bx,\bv)$ for $(\bx,\bv) \in \Omega \times \R^d$ with $d\geq 2$, is the mass distribution function of charged particles, such as electrons and ions. 
$E$ represents the electric field, which can be prescribed or obtained self-consistently through the Poisson or Ampere equation. The collision kernel $A$ is expressed as
\begin{equation} \label{A}
    A(\bz)=C_{\gamma}|\bz|^{\gamma} (|\bz|^{2}I_d - \bz \otimes \bz)=:C_{\gamma}|\bz|^{\gamma+2} \Pi(\bz) \,,   
\end{equation}
where $C_{\gamma} > 0$ is the collision strength, and $I_d$ is the identity matrix. Consequently, $\Pi(\bz)$ denotes the projection into the orthogonal complement of $\{\bz\}$. The parameter $\gamma$ can take values within the range $-d-1 \leq \gamma \leq 1$. Among them, the most interesting case is when $d = 3$ and $\gamma = -3$, which corresponds to the Coulomb interaction in plasma \cite{degond1992fokker, Villani1998b}. Alternatively, the case $\gamma = 0$ is commonly referred to as the Maxwell case. In this scenario, the equation reduces to a form of degenerate linear Fokker-Planck equation, preserving the same moments as the Landau equation \cite{villani1998a}.

In this paper, our primary focus is on computing the collision operator, and therefore we shall exclusively consider the spatially homogeneous Landau equation
\begin{equation}\label{Landau_Homogeneous}
    \partial_t \tf_t = Q(\tf_t, \tf_t) \,.
\end{equation}
Key properties of $Q$ include conservation and entropy dissipation, which can be best understood formally through the following reformulation:
\begin{equation*}
    Q(\tf_t, \tf_t) = \nabla \cdot \left[ \int_{\R^d} A(\bv-\bv_*) \tf_t \tf_{t,*} (\nabla\log \tf_t - \nabla_* \log \tf_{t,*}) \rd\bv_* \right] \,,
\end{equation*}
where we have used the abbreviated notation
\begin{equation*}
    \tf_t:=\tf_t(\bv), ~ 
    \tf_{t,*}:=\tf_t(\bv_*), ~
    \nabla := \nabla_{\bv}, ~
    \nabla_* := \nabla_{\bv_*} \,.
\end{equation*}
For an appropriate test function $\phi = \phi(\bv)$, it admits the weak formulation:
\begin{equation*}
    \int_{\R^d} Q(\tf_t, \tf_t)\phi \rd\bv = -\frac{1}{2} \iint_{\R^{2d}} (\nabla\phi - \nabla_*\phi_*) \cdot A(\bv-\bv_*) (\nabla\log \tf_t - \nabla_*\log \tf_{t,*}) \tf_t \tf_{t,*} \rd\bv \rd\bv_* \,.
\end{equation*}
Then, choosing $\phi(\bv) = 1, \bv, |\bv|^2$ leads to the conservation of mass, momentum, and energy. Inserting $\phi(\bv) = \log \tf_t(\bv)$, one
obtains the formal entropy decay with dissipation given by
\begin{equation}\label{entropy_decay_rate}
    \begin{split}
        \frac{\rd}{\rd t} \int_{\R^d} \tf_t \log \tf_t \rd\bv 
        & = \int_{\R^d} Q(\tf_t, \tf_t) \log \tf_t \rd\bv \\
        & = -\frac{1}{2} \iint_{\R^{2d}} B_{\bv, \bv_*} \cdot A(\bv-\bv_*) B_{\bv, \bv_*} \tf_t \tf_{t,*} \rd\bv \rd\bv_* \leq 0 \,,
    \end{split}
\end{equation}
where we have used the fact that $A$ is symmetric and semi-positive definite, and $B_{\bv, \bv_*}:=\nabla\log \tf_t - \nabla_*\log \tf_{t,*}$. The equilibrium distribution is given by the Maxwellian
\begin{equation*}
    M_{\rho, \bu, T} = \frac{\rho}{(2\pi T)^{d/2}} \exp\left(-\frac{|\bv-\boldsymbol{u}|^2}{2T}\right) \,,
\end{equation*}
for $\rho, T$ and $\bu$ given by
\begin{equation*}
    \rho = \int_{\R^d} \tf_t(\bv) \rd\bv, \quad \rho\bu = \int_{\R^d} \bv \tf_t(\bv) \rd\bv, \quad \rho |\bu|^2 + \rho dT = \int_{\R^d} |\bv|^2 \tf_t(\bv) \rd\bv \,.
\end{equation*}
More rigorous approach can be found in \cite{Villani1998b, gualdani2017spectral}.

Theoretical understanding of the well-posedness of the homogeneous equation \eqref{Landau_Homogeneous} with hard potential ($\gamma>0$) is now well-established, primarily due to the seminal works in \cite{villani2000a, villani2000b} and related literature. The regularity of the solution, such as moment propagation, has also been rigorously established. A pivotal aspect involves leveraging finite entropy dissipation, leading to the robust notion of `H-solution' introduced by Villani \cite{Villani1998b}.
Less is currently known about soft potentials ($\gamma<0$). One significant advancement in this regard was a global existence and uniqueness result by Guo \cite{Guo2002}, in which a sufficient regular solution close to the Maxwellian is considered in the spatially inhomogeneous case. A novel interpretation of the homogeneous Landau equation as a gradient flow has emerged \cite{carrillo2022landau}, along with its connection to the Boltzmann equation via the gradient flow perspective \cite{carrillo2022grazing}. A recent breakthrough on the global well-posedness of the Landau equation is made in \cite{guillen2023landau}.

Various numerical methods have been developed to compute the Landau operator, including the Fourier-Galerkin spectral method \cite{spectrallandau}, the direct simulation Monte Carlo method \cite{dimarco2010direct, ROSIN2014140}, the finite difference entropy method \cite{degond1994entropy, buet1998conservative}, and the deterministic particle method \cite{carrillo2020particle}. Among these, we are particularly interested in the deterministic particle method \cite{carrillo2020particle}, which preserves all desirable physical quantities, including conservation of mass, momentum, energy, and entropy dissipation. 

The main idea in \cite{carrillo2020particle} is to reformulate the homogeneous Landau equation \eqref{Landau_Homogeneous} into a continuity equation with a gradient flow structure:
\begin{equation}\label{Landau_gf}
    \begin{split}
        \partial_t \tf_t &= -\nabla \cdot (\tf_t \bU[\tf_t]) \,, \\
        \bU[\tf_t] &= -\int_{\R^d} A(\bv-\bv_*) \left(\nabla\frac{\delta \mathcal H}{\delta \tf_t} - \nabla_*\frac{\delta \mathcal H_*}{\delta \tf_{t,*}}\right) \tf_{t,*}\rd \bv_* \,,~~ \mathcal H = \int_{\R^d} \tf_t \log \tf_t \rd \bv \,,
    \end{split}
\end{equation}
where $\bU[\tf_t]$ is the velocity field. Employing a particle representation
\begin{equation*}
    \tf_t^N(\bv) = \sum_{i = 1}^N w_i \delta (\bv -\bv_i(t)) \,,
\end{equation*}
where $N$ is the total number of particles, $\bv_i(t)$ and $w_i$ are the velocity and weight of the particle $i$, respectively. Here, $\delta $ is the Dirac-Delta function. Subsequently, the particle velocities update following the characteristics of \eqref{Landau_gf}:
\begin{align} \label{p1}
    \dot{\bv}_i(t) = \bU[\tf_t^N](\bv_i(t), t) = 
    -\sum_{j=1}^N w_j A(\bv_i - \bv_j) \left(\nabla\frac{\delta \mathcal H^N}{\delta \tf_t} (\bv_i) - \nabla\frac{\delta \mathcal H^N}{\delta \tf_t} (\bv_j) \right) \,.
\end{align}
To make sense of the entropy $\mathcal{H}^N$, a crucial aspect of this method involves replacing $\mathcal{H}^N$ with a regularized version:
\begin{align} \label{p2}
    \mathcal{H}^N_\varepsilon := \int \tf^N_{t,\varepsilon} \log \tf^N_{t,\varepsilon} \rd\bv \,, ~~
    \tf^N_{t,\varepsilon} := \psi_\varepsilon \ast \tf_t^N = \sum_{i=1}^N w_i \psi_\varepsilon(\bv - \bv_i(t)) \,,
\end{align}
where $\psi_\varepsilon$ is a mollifier such as a Gaussian. This way of regularization follows the previous work on the blob method for diffusion \cite{carrillo2019blob}. The convergence of this method is obtained in \cite{convparticle}, and a random batch version is also available in \cite{randombatch}.

While being structure-preserving, a significant bottleneck in (\ref{p1}--\ref{p2}) lies in the necessity to compute $\tf^N_{t,\varepsilon}$ in \eqref{p2}, a task often referred to as kernel density estimation. This task is further compounded when computing the velocity via $\nabla \frac{\delta \mathcal H^N_\varepsilon}{\delta \tf_t}$ (see \eqref{double}). It is widely recognized that this estimation scales poorly with dimension. To address this challenge, the main concept in this paper is to recognize that the nonlinear term in the velocity field, which depends on density, serves as the score function, i.e., 
\begin{align*}
    \bU[\tf_t] = -\int_{\R^d} A(\bv-\bv_*) (\underbrace{\nabla\log \tf_t}_{\text{score}} - \nabla_*\log \tf_{t,*}) \tf_{t,*} \rd \bv_* \,,
\end{align*}
and it can be efficiently learned from particle data by score-matching trick \cite{hyvarinen05a, Vincent11}. More particularly, start with a set of $N$ particles $\{\bV_i\}_{i=1}^N \stackrel{i.i.d.}\sim \tf_0 = f_0$, 
we propose the following update process for their later time $t$ velocities $ \{\bv_i(t)\}_{i=1}^N$ as
\begin{equation*}
    \begin{cases}{}
     \bs (\bv) \in \arg \min \frac{1}{N} \sum_{i=1}^N |\bs(\bv_i(t))|^2 + 2 \nabla \cdot \bs(\bv_i(t)) \,,
     \\ \\ \dot{\bv}_i(t) =- \frac1N \sum_{j=1}^N A(\bv_i(t)-\bv_j(t)) (\bs(\bv_i(t)) - \bs(\bv_j(t))) \,.
    \end{cases}
\end{equation*}
This process involves initially learning a score function at each time step using the current particle velocity information and subsequently utilizing the learned score to update the particle velocities. Closest to our approach is the score-based method for the Fokker-Planck equation \cite{boffi2206probability,lu2024score}, where a similar idea of dynamically learning the score function is employed. Additionally, learning velocities instead of solutions, such as self-consistent velocity matching for Fokker-Planck-type equations \cite{shen22a, shen2023entropydissipation, velmatch}, and the DeepJKO method for general gradient flows \cite{lee2023deep}, are also related. However, a key distinction lies in our treatment of the real Landau operator, which is significantly more challenging than the Fokker-Planck operator. In this regard, our work represents the initial attempt at leveraging the score-matching technique to compute the Landau operator. Another related work is \cite{yifei2024}, where the author approximates the Wasserstein gradient direction using a class of two-layer neural networks with ReLU activation, and proposes a semi-definite program relaxation to find such an approximation.

It's worth noting that compared to other popular neural network-based PDE solvers, such as the physics-informed neural network (PINN) \cite{pinn}, the deep Galerkin method (DGM) \cite{dgm}, the deep Ritz Method \cite{dritz}, and the weak adversarial network (WAN) \cite{wan}, as well as those specifically designed for kinetic equations \cite{Jin2023, jin2024mulki, Jin2024vpfp, Lu2022}, the proposed method requires very {\it light training}. The sole task is to sequentially learn the score, and considering that the score doesn't change significantly over each time step, only a small number of training operations (approximately 25 iterations) are needed. This method offers a compelling combination of classical methods with complementary machine learning techniques.

The rest of the paper is organized as follows. In the next section, we introduce the main formulation of our method based on a flow map reformulation and present the score-based algorithm. In Section \ref{sec:3}, we establish a stability estimate using relative entropy, theoretically justifying the controllability of our numerical error by score-matching loss. Section \ref{sec:4} provides an exact update formula for computing the density along the trajectories of the particles. Numerical tests are presented in Section \ref{sec:5}, and the paper is concluded in Section \ref{sec:6}.

\section{A score-based particle method}
This section is devoted to the derivation of a deterministic particle method based on score-matching. As mentioned in the previous section, our starting point is the reformulation of the homogeneous Landau equation \eqref{Landau_Homogeneous} into a continuity equation:
\begin{equation}\label{Landau_Continuity}
    \begin{split}
        \partial_t \tf_t &= -\nabla \cdot (\tf_t \bU[\tf_t]) \,, \\
        \bU[\tf_t] &= -\int_{\R^d} A(\bv-\bv_*) (\nabla\log \tf_t - \nabla_*\log \tf_{t,*}) \tf_{t,*} \rd \bv_* \,,
    \end{split}
\end{equation}
where $\bU[\tf_t]$ is the velocity field.

\subsection{Lagrangian formulation via transport map}
The formulation \eqref{Landau_Continuity} gives access to the Lagrangian formulation. In particular, let $\bT(\cdot, t)$ be the flow map, then for a particle started with velocity $\bV$, its later time velocity can be obtained as $\bv(t):=\bT(\bV, t)$, which satisfies the following ODE: 
\begin{equation}\label{flow_map0}
    \frac{\rd}{\rd t}\bT(\bV,t) 
    = -\int_{\R^d} A(\bT(\bV, t)-\bv_*) \left[\nabla \log \tf_t (\bT(\bV, t)) - \nabla_* \log \tf_t(\bv_*)\right] \tf_t(\bv_*) \rd \bv_* \,.
\end{equation}
Using the fact that the solution to \eqref{Landau_Continuity} can be viewed as the pushforward measure under the flow map, i.e., 
\begin{equation*}
    \tf_t(\cdot) = \bT(\cdot, t)_{\#} f_0(\cdot) \,,
\end{equation*}
we can rewrite \eqref{flow_map0} as
\begin{equation}\label{flow_map}
\begin{cases}{}
    \frac{\rd}{\rd t}\bT(\bV,t) = -\int_{\R^d} A(\bT(\bV, t)-\bT(\bV_*, t)) \left[\nabla \log \tf_t (\bT(\bV, t)) - {\nabla} \log \tf_t(\bT(\bV_*, t))\right] f_0(\bV_*) \rd \bV_* \,, \\ 
    \bT(\bV, 0) = \bV \,.
\end{cases}
\end{equation}
Therefore, if we start with a set of $N$ particles $\{\bV_i\}_{i=1}^N \stackrel{i.i.d.}\sim f_0$, then the later velocity $\bv_i(t):= \bT(\bV_i, t)$ satisfies 
\begin{equation}\label{particle0}
\begin{cases}{}
    \dot{\bv}_i(t) = -\frac{1}{N}\sum_{j=1}^N A(\bv_i(t)-\bv_j(t)) \left[ \nabla \log \tf_t (\bv_i(t)) - \nabla \log \tf_t(\bv_j(t)) \right] \,, \\ 
    \bv_i(0) = \bV_i \,,
\end{cases}
\end{equation}
for $1\leq i\leq N$.  

This formulation immediately has the following favorable properties. Hereafter,  we denote $\tilde \bs_t := \nabla \log \tilde f_t$. 

\begin{proposition} 
    The particle formulation \eqref{particle0} conserves mass, momentum, and energy. Specifically, the following quantities remain constant over time:
    \begin{equation*}
        \sum_{i=1}^N \frac{1}{N}, \quad \frac{1}{N} \sum_{i=1}^N \bv_i(t), \quad \frac{1}{N} \sum_{i=1}^N |\bv_i(t)|^2 \,.
    \end{equation*}
\end{proposition}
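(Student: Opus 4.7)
The plan is to show each conservation law by differentiating the corresponding sum in time and exploiting the algebraic symmetries of the collision kernel $A$ together with the antisymmetry of the score difference under the swap $i \leftrightarrow j$. First, the conservation of mass is immediate since the weights $1/N$ are time-independent, so $\sum_i 1/N = 1$ trivially.

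For momentum, I would compute
\begin{equation*}
    \frac{\rd}{\rd t}\frac{1}{N}\sum_{i=1}^N \bv_i(t) = -\frac{1}{N^2}\sum_{i,j=1}^N A(\bv_i-\bv_j)\bigl(\tbs_t(\bv_i)-\tbs_t(\bv_j)\bigr)\,,
\end{equation*}
and then observe that under the exchange $i \leftrightarrow j$ the matrix $A(\bv_i-\bv_j)=A(\bv_j-\bv_i)$ (since $A(\bz)$ in \eqref{A} depends only on even functions of $\bz$), while the score increment $\tbs_t(\bv_i)-\tbs_t(\bv_j)$ flips sign. Hence the double sum is antisymmetric and vanishes.

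For energy, the derivative becomes
\begin{equation*}
    \frac{\rd}{\rd t}\frac{1}{N}\sum_{i=1}^N |\bv_i(t)|^2 = -\frac{2}{N^2}\sum_{i,j=1}^N \bv_i\cdot A(\bv_i-\bv_j)\bigl(\tbs_t(\bv_i)-\tbs_t(\bv_j)\bigr)\,.
\end{equation*}
Symmetrizing the $\bv_i$ factor via the $i \leftrightarrow j$ swap (and again using the evenness of $A$ together with the sign flip of the score difference) rewrites this as a sum with $\bv_i-\bv_j$ in front. The key remaining fact is the projection property baked into $A$: from \eqref{A}, $A(\bz)\bz = C_\gamma|\bz|^{\gamma+2}\Pi(\bz)\bz = 0$, so $A(\bv_i-\bv_j)(\bv_i-\bv_j)=0$ for every pair, and the whole sum collapses to zero.

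The only step that requires any care is the energy identity, where one must be careful with the symmetrization factor of two and make sure the orientation of $\bv_i-\bv_j$ (rather than $\bv_j-\bv_i$) is what appears, so that the projection identity $A(\bz)\bz=0$ can be applied cleanly; the momentum and mass cases are essentially bookkeeping. No additional regularity of $\tbs_t$ is needed beyond what is implicit in assuming \eqref{particle0} makes sense.
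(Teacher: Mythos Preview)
Your proposal is correct and follows essentially the same approach as the paper: symmetrize the double sum via the swap $i\leftrightarrow j$, use the evenness of $A$ to handle momentum, and invoke the projection identity $A(\bz)\bz=0$ for energy. The only cosmetic difference is that the paper packages all three cases into a single computation with test functions $\phi(\bv)=1,\bv,|\bv|^2$, whereas you treat each conserved quantity separately.
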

\begin{proof}
    Indeed, for $\phi(\bv) = 1, \bv, |\bv|^2$, we have
    \begin{equation*}
        \begin{split}
            \frac{\rd}{\rd t} \frac{1}{N} \sum_{i=1}^N \phi(\bv_i(t)) 
            & = \frac{1}{N} \sum_{i=1}^N \nabla\phi(\bv_i(t)) \cdot \dot{\bv}_i(t) \\
            & = -\frac{1}{N^2} \sum_{i,j=1}^N \nabla\phi(\bv_i(t)) \cdot A(\bv_i(t)-\bv_j(t)) [\tbs_t(\bv_i(t)) - \tbs_t(\bv_j(t))] \\
            & = -\frac{1}{2N^2} \sum_{i,j=1}^N [\nabla\phi(\bv_i(t)) - \nabla\phi(\bv_j(t))] \cdot A(\bv_i(t)-\bv_j(t)) [\tbs_t(\bv_i(t)) - \tbs_t(\bv_j(t))] = 0 \,,
        \end{split}
    \end{equation*}
    which leads to the conversation of mass, momentum, and energy. 
\end{proof}

\subsection{Learning the score}
Implementing \eqref{particle0} faces a challenge in representing the score function $\tbs_t = \nabla \log \tf_t$ using particles. A natural approach is to employ kernel density estimation:
\[
\tf_t(\bv_i) \approx  \frac{1}{N} \sum_{j=1}^N \psi_\varepsilon(\bv_i - \bv_j(t)) \,,
\]
where $\psi_\varepsilon$ could be, for instance, a Gaussian kernel. However, this estimation becomes impractical with increasing dimensions due to scalability issues. Instead, we propose utilizing the score-matching technique to directly learn the score $\tbs_t$ from the data. This technique dates back to \cite{hyvarinen05a, Vincent11} and has flourished in the context of score-based generative modeling \cite{song2020generative, song2020techscorebased, song2021scorebased} recently, and has been used to compute the Fokker-Planck type equations \cite{boffi2206probability, lu2024score}. 

Now let $\bs_t(\bv): \R^d \to \R^d$ be an approximation to the exact score $\tbs_t(\bv)$, we define the score-based Landau equation as
\begin{equation}\label{Landau_Score}
    \partial_t f_t = -\nabla \cdot (\bU^\delta f_t) \,, ~
    \bU^\delta := -\int_{\R^d} A(\bv-\bv_*)(\bs_t(\bv) - \bs_t(\bv_*))f_{{t,*}} \rd \bv_* \,.
\end{equation}
 Let $\bT(\cdot,t)$ be the corresponding flow map of \eqref{Landau_Score}, then we have
\begin{flalign*}
        \frac{\rd}{\rd t}\bT(\bV,t) = -\int_{\mathbb{R}^{d}} A(\bT(\bV,t)-\bT(\bV_*,t)) \left[\bs_t(\bT(\bV,t)) - \bs_{t}(\bT(\bV_*,t)) \right] f_0(\bV_*) \rd\bV_*, \ \bT(\bV,0)=\bV \,.
\end{flalign*}
For any $t\geq 0$, to get $\tbs_t$, we seek to minimize 
\begin{equation}
        \min_{\bs_t} \int_{\R^d} \left[|\bs_t(\bT(\bV,t))|^2 + 2 \nabla \cdot \bs_t(\bT(\bV,t))\right] f_0(\bV) \rd\bV \,.
\end{equation}

In practice,  given $N$ initial particles with velocities $\{\bV_i\}_{i=1}^N \stackrel{i.i.d.}\sim f_0$, at each time $t$, we train the neural network $\bs_t$ to minimize the implicit score-matching loss, i.e., 
\begin{equation*}
    \min_{\bs_t}\frac{1}{N} \sum_{i=1}^N |\bs_t(\bT(\bV_i,t))|^2 + 2 \nabla \cdot \bs_t(\bT(\bV_i,t)) = \frac{1}{N} \sum_{i=1}^N |\bs_t(\bv_i(t))|^2 + 2 \nabla \cdot \bs_t(\bv_i(t)) \,,
\end{equation*}
and then evolves the particles via \eqref{particle0} with learned $\bs_t$, i.e., replacing $\nabla \log\tf_t(\bv)$ by $\bs_t(\bv)$.

\subsection{Algorithm}
We hereby summarize a few implementation details. The time discretization of \eqref{particle0} is done by the forward Euler method. The initial neural network $\bs_0$ is trained to minimize the relative loss compared to the analytical form:
\begin{equation}\label{L2_loss}
    \ell_1 := \frac{\int_{\R^d} | \bs_0(\bv) - \nabla \log f_0(\bv) |^2 f_0(\bv) \rd\bv}{\int_{\R^d} | \nabla\log f_0(\bv) |^2 f_0(\bv) \rd\bv} 
    \approx \frac{\sum_{i=1}^N | \bs_0(\bV_i) - \nabla \log f_0(\bV_i) |^2}{\sum_{i=1}^N | \nabla\log f_0(\bV_i) |^2} \,.
\end{equation}
For the subsequent steps, we initialize $\bs_n$ using the previously trained $\bs_{n-1}$ and train it to minimize the implicit score-matching loss at time $t_n$,
\begin{equation}\label{ISM_loss}
    \ell_2 := \frac{1}{N} \sum_{i=1}^N |\bs_n(\bv_i^n)|^2 + 2 \nabla \cdot \bs_n(\bv_i^n), \quad \bv_i^n \approx \bv_i(t_n) \,.
\end{equation}
Note that to avoid the expensive computation of the divergence, especially in high dimensions, the denoising score-matching loss function introduced in \cite{Vincent11} is often utilized. However, here we still compute the divergence exactly through automatic differentiation, as it allows for precise tracking of the optimization process's convergence.  
Once the score is learned from \eqref{ISM_loss}, the velocity of particles can be updated via
\begin{align}\label{vup}
    \bv_i^{n+1} = \bv_i^n - \Delta t \frac1N \sum_{j=1}^N A(\bv_i^n-\bv_j^n) (\bs_n(\bv_i^n) - \bs_n(\bv_j^n)) \,.
\end{align}

The procedure of the score-based particle method is summarized in Algorithm \ref{algorithm_no_densitiy}.
\begin{algorithm}[H]
\caption{Score-based particle method for the homogeneous Landau equation}
\label{algorithm_no_densitiy}
    \begin{algorithmic}[1]
        \REQUIRE $N$ initial particles $\{\bV_i\}_{i=1}^N \stackrel{i.i.d.}\sim f_0$; time step $\Delta t$ and the total number of time steps $N_T$; error tolerance $\delta$ for the initial score-matching; max iteration number $I_{\max}$ for the implicit score-matching.
        \ENSURE Score neural networks $\bs_{n-1}$ and particles $\{\bv_i^n\}_{i=1}^N$ for all $n=1, \cdots, N_T$.
        
        \STATE Initialize neural network $\bs_0$.
        \WHILE{$\ell_1 > \delta$}
            \STATE Update parameters of $\bs_0$ by applying optimizer to $\ell_1$ \eqref{L2_loss}.
        \ENDWHILE
        \FOR{$i=1,\cdots,N$}
            \STATE $\bv_i^{1} = \bV_i - \Delta t \frac1N \sum_{j=1}^N A(\bV_i-\bV_j) (\bs_0(\bV_i)-\bs_0(\bV_j))$.
        \ENDFOR
        
        \FOR{$n=1,\cdots, N_T-1$}
            \STATE Initialize neural network $\bs_n$ using the previously trained $\bs_{n-1}$.
            \STATE Set $I=0$.
            \WHILE{$I < I_{\max}$} 
                \STATE Update parameters of $\bs_n$ by applying optimizer to $\ell_2$ \eqref{ISM_loss}.
                \STATE $I=I+1$.
            \ENDWHILE
            \FOR{$i=1,\cdots,N$}
                \STATE obtain $\bv_i^{n+1}$ from $\bv_i^{n}$ via \eqref{vup}.
            \ENDFOR
        \ENDFOR
    \end{algorithmic}
\end{algorithm}

We would like to note that the computational complexity of the score learning step (line 12) in Algorithm~\ref{algorithm_no_densitiy} is $\mathcal {O}(N)$. See also Fig.~\ref{fig_eg4_time} for numerical evidence. However, the particle update step (line 16) remains $\mathcal{O}(N^2)$. This cost can be reduced using the random batch method, as explored in \cite{randombatch}, which accelerates the approach presented in \cite{carrillo2020particle}.

Several macroscopic quantities can be computed using particles at  time $t_n$, including mass, momentum, and energy:
\begin{equation*}
    \sum_{i=1}^N \frac{1}{N} \,, \quad \frac{1}{N} \sum_{i=1}^N \bv_i^n \,, \quad \frac{1}{N} \sum_{i=1}^N |\bv_i^n|^2 \,,
\end{equation*}
and the estimated entropy decay rate:
\begin{equation*} 
    -\frac{1}{N^2} \sum_{i,j=1}^N \bs_n(\bv_i^n) \cdot A(\bv_i^n - \bv_j^n) (\bs_n(\bv_i^n) - \bs_n(\bv_j^n)) \,.
\end{equation*}

\begin{proposition}\label{prop:2.2}
    The score-based particle method conserves mass and momentum exactly, while the energy is conserved up to $\mathcal{O}(\Delta t)$. 
\end{proposition}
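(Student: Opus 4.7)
My plan is to check each of the three conservation statements directly against the explicit update rule
\eqref{vup}, exploiting the two structural properties of the kernel $A$: symmetry under $\bz \mapsto -\bz$, and the identity $A(\bz)\bz = 0$ that follows from $A(\bz) = C_\gamma|\bz|^{\gamma+2}\Pi(\bz)$ being a scalar multiple of the projection onto $\{\bz\}^\perp$.

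The first step is mass, which is immediate: the particle count $N$ and the uniform weights $1/N$ are unchanged by \eqref{vup}, so $\sum_i 1/N = 1$ for all $n$.

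For momentum, I would sum \eqref{vup} over $i$ and isolate the double sum
\[
    \sum_{i,j=1}^N A(\bv_i^n - \bv_j^n)\bigl(\bs_n(\bv_i^n) - \bs_n(\bv_j^n)\bigr) \,.
\]
Swapping the indices $i \leftrightarrow j$ and using $A(-\bz) = A(\bz)$ shows this sum equals its own negative, hence vanishes. Thus $\frac{1}{N}\sum_i \bv_i^{n+1} = \frac{1}{N}\sum_i \bv_i^n$ exactly, mirroring the calculation of Proposition 2.1 (with $\phi(\bv) = \bv$) at the discrete level.

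For energy, squaring \eqref{vup} gives
\[
    |\bv_i^{n+1}|^2 = |\bv_i^n|^2 - \frac{2\Delta t}{N}\sum_{j=1}^N \bv_i^n \cdot A(\bv_i^n - \bv_j^n)\bigl(\bs_n(\bv_i^n) - \bs_n(\bv_j^n)\bigr) + \mathcal{O}(\Delta t^2) \,.
\]
Summing in $i$ and symmetrizing the double sum by $i \leftrightarrow j$ (again using $A(-\bz) = A(\bz)$) turns the linear-in-$\Delta t$ term into
\[
    -\frac{\Delta t}{N}\sum_{i,j=1}^N (\bv_i^n - \bv_j^n) \cdot A(\bv_i^n - \bv_j^n)\bigl(\bs_n(\bv_i^n) - \bs_n(\bv_j^n)\bigr) \,.
\]
Since $A$ is symmetric and $A(\bz)\bz = 0$, the vector $A(\bv_i^n - \bv_j^n)(\bv_i^n - \bv_j^n)$ is zero, so this term vanishes. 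Consequently, $\frac{1}{N}\sum_i |\bv_i^{n+1}|^2 = \frac{1}{N}\sum_i |\bv_i^n|^2 + \mathcal{O}(\Delta t^2)$ per step, and after $N_T = T/\Delta t$ steps the accumulated energy drift is $\mathcal{O}(\Delta t)$.

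The only mild subtlety is bookkeeping the $\mathcal{O}(\Delta t^2)$ remainder so that its sum over $N_T$ steps can be safely bounded; this requires a pointwise bound on the discrete velocity field $\frac{1}{N}\sum_j A(\bv_i^n - \bv_j^n)(\bs_n(\bv_i^n) - \bs_n(\bv_j^n))$ along the evolution, which in practice follows from the boundedness of the trained score $\bs_n$ on the evolving particle cloud. I expect this to be the only non-cosmetic obstacle; the mass and momentum arguments are exact algebraic cancellations, and the $\mathcal{O}(\Delta t)$ energy bound is a standard forward-Euler local-truncation-error accumulation.
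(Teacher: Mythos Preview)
Your proposal is correct and follows essentially the same route as the paper: momentum by the $i\leftrightarrow j$ swap and evenness of $A$, energy by symmetrizing the cross term and invoking $A(\bz)\bz=0$. The only cosmetic difference is that the paper keeps the quadratic remainder explicit as $\Delta t^2\,\frac{1}{N}\sum_i |G(\bv_i^n)|^2$ rather than writing $\mathcal{O}(\Delta t^2)$, which sidesteps the bookkeeping concern you raise about bounding the discrete velocity field; the paper simply states the per-step identity and reads off the $\mathcal{O}(\Delta t)$ conclusion without further discussion.
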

\begin{proof}
    Note that mass is trivially conserved. To see the momentum conservation, observe that
    \begin{equation*}
        \bv_i^{n+1} = \bv_i^n - \Delta t \underbrace{\frac1N \sum_{j=1}^N A(\bv_i^n-\bv_j^n) (\bs_n(\bv_i^n) - \bs_n(\bv_j^n))}_{G(\bv_i^n)} \,.
    \end{equation*}
    Multiplying both sides by $\frac{1}{N}$ and sum over $i$, we obtain
    \begin{equation*}
    \begin{split}
        \frac{1}{N} \sum_{i=1}^N \bv_i^{n+1} - \frac{1}{N} \sum_{i=1}^N \bv_i^n
        & = -\Delta t \frac{1}{N^2} \sum_{i,j=1}^N A(\bv_i^n-\bv_j^n) (\bs_n(\bv_i^n) - \bs_n(\bv_j^n)) \\
        & = \Delta t \frac{1}{N^2} \sum_{i,j=1}^N A(\bv_i^n-\bv_j^n) (\bs_n(\bv_i^n) - \bs_n(\bv_j^n)) = 0 \,.
    \end{split}
    \end{equation*}
   Here, the second equality is obtained by switching $i$ and $j$ and using the symmetry of matrix $A$. 
   
   For the energy,
  note that 
   \begin{equation*}
       \begin{split}
           \sum_{i=1}^N \bv_i^n \cdot G(\bv_i^n) 
           & = \frac{1}{N} \sum_{i,j=1}^N \bv_i^n \cdot  A(\bv_i^n-\bv_j^n) (\bs_n(\bv_i^n) - \bs_n(\bv_j^n)) \\
           & = \frac{1}{2N} \sum_{i,j=1}^N (\bv_i^n-\bv_j^n) \cdot A(\bv_i^n-\bv_j^n) (\bs_n(\bv_i^n) - \bs_n(\bv_j^n)) = 0 \,,
       \end{split}
   \end{equation*}
   where we use the projection property of $A$. Thus
   \begin{equation*}
       \begin{split}
           \frac{1}{N} \sum_{i=1}^N |\bv_i^{n+1}|^2 
           & = \frac{1}{N} \sum_{i=1}^N |\bv_i^n - \Delta t G(\bv_i^n)|^2 \\
           & = \frac{1}{N} \sum_{i=1}^N |\bv_i^n|^2 - 2 \Delta t \frac{1}{N} \sum_{i=1}^N \bv_i^n \cdot G(\bv_i^n) + \Delta t^2 \frac{1}{N} \sum_{i=1}^N |G(\bv_i^n)|^2  \\
           & = \frac{1}{N} \sum_{i=1}^N |\bv_i^n|^2 + \Delta t^2 \frac{1}{N} \sum_{i=1}^N |G(\bv_i^n)|^2 \,,
       \end{split}
   \end{equation*}
   which implies energy is conserved up to $\mathcal{O}(\Delta t)$.
\end{proof}

\begin{remark}
    It is possible to achieve exact momentum and energy conservation using the midpoint discretization in time \cite{Hirvijoki_2021}: 
    \begin{equation*}
        \bv_i^{n+1} = \bv_i^n - \Delta t \frac1N \sum_{j=1}^N A(\bv_i^{n+\frac12}-\bv_j^{n+\frac12}) (\bs_i - \bs_j) \,, \text{ where } \bv_i^{n+\frac12} = \frac{\bv_i^n + \bv_i^{n+1}}{2} \,.
    \end{equation*}
    Here $\bs_i = \bs_n(\bv_i^n)$ or $\bs_i = \bs(\bv_i^{n+\frac12})$. The proof of the momentum conservation is the same as Proposition \ref{prop:2.2}. The energy is conserved since: 
    \begin{equation*}
        \begin{split}
            \frac{1}{N} \sum_{i=1}^N (|\bv_i^{n+1}|^2 - |\bv_i^n|^2) 
            & = \frac{1}{N} \sum_{i=1}^N (\bv_i^{n+1} - \bv_i^n) \cdot (\bv_i^{n+1} + \bv_i^n) \\
            & = - \frac{2 \Delta t}{N^2} \sum_{i=1}^N \sum_{j=1}^N (\bs_i - \bs_j) A(\bv_i^{n+\frac12}-\bv_j^{n+\frac12}) \bv_i^{n+\frac12} \\
            & = - \frac{\Delta t}{N^2} \sum_{i=1}^N \sum_{j=1}^N (\bs_i - \bs_j) A(\bv_i^{n+\frac12}-\bv_j^{n+\frac12}) (\bv_i^{n+\frac12} - \bv_j^{n+\frac12}) = 0 \,.
        \end{split}
    \end{equation*}
The tradeoff is that this method requires an implicit update for $\bv^{n+1}_i$, 
which necessitates a fixed-point iteration. This will be explored in future work.
\end{remark}

\section{Theoretical analysis}\label{sec:3}
In this section, we provide a theoretical justification for our score-based formulation. In particular, we show that the KL divergence between the computed density obtained from the learned score and the true solution can be controlled by the score-matching loss. This result is in the same vein as Proposition 1 in \cite{boffi2206probability} and Theorem 1 in \cite{lu2024score}, but with significantly more details due to the intricate nature of the Landau operator. A similar relative entropy approach for obtaining quantitative propagation of chaos-type results for the Landau-type equation has also been recently established in \cite{carrillo2024mean}.

To simplify the analysis, we assume that $\bv$ is on the torus $\bbT^d$. This is a common setting,  as the universal function approximation of neural networks is typically applicable only over a compact set. Additionally, in this setting, the boundary terms resulting from integration by parts vanish. We then make the following additional assumptions:

\begin{itemize}
    \item[] (A.1)  The collision kernel $A$ satisfies $\lambda_1 I_d \preceq A \preceq \lambda_2 I_d$, $0< \lambda_1 \leq \lambda_2$. This allows us to avoid the potential degeneracy and singularity at the origin.
    \item[] (A.2) Assume that the initial condition $f_0$ satisfies $f_0 \geq 0$, $f_0 \in W^{2,\infty}(\bbT^d)$, and $\int_{\bbT^d} f_0 \rd\bv=1$. This guarantees that the solution of \eqref{Landau_Continuity} satisfies $\tf_t \in L^{\infty}([0,T], W^{2,\infty}(\bbT^d))$, $\tf_t \geq 0$, and $\int_{\bbT^d} \tf_t \rd\bv=1$ for all $t \in[0,T]$. In addition, we assume the solution of \eqref{Landau_Score} satisfies $f_t \in L^{\infty}([0,T], W^{2,\infty}(\bbT^d))$, $f_t \geq 0$, and $\int_{\bbT^d} f_t \rd\bv=1$ for all $t \in[0,T]$. 

    \item[] (A.3) The solution $\tf_t$ to the original Landau equation \eqref{Landau_Continuity} satisfies 
    \begin{equation*}
        \sup_{(v, t) \in \bbT^d \times [0,T]} |\nabla\log\tf_t| \leq M 
    \end{equation*}
    for some constant $M$. 
\end{itemize}

Note that assumption (A.2) can be satisfied under assumption (A.1), following the classical theory of advection-diffusion equations. Assumption (A.3) is a direct consequence of (A.2). We list it here solely for ease of later reference. Regarding assumption (A.1), it is primarily needed to estimate the term $I_1$ (see \eqref{I1}) in the main theorem. However, its necessity can be relaxed to:
\[
\int_{\bbT^d} A(\bv - \bv_*) f(\bv) \rd \bv \succeq \lambda_1 I_d
\]
for any probability measure $f(\bv) \rd \bv$, and  $\lambda_1 >0$. 
To this end, we present the following proposition to justify this assumption partially.

\begin{proposition}\label{prop:pd}
    For any probability measure $\rho(\bv) \rd\bv$ on the torus $\bbT^d$, $d=2, 3$, we have
    $$\int_{\bbT^3} A(\bv-\bv_*) \rho(\bv_*) \rd\bv_* \succ 0 \,.$$
\end{proposition}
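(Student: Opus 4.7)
The plan is to fix an arbitrary unit vector $\bu\in\R^d$ and show that $\bu^\top M(\bv)\bu > 0$, where $M(\bv) := \int_{\bbT^d} A(\bv-\bv_*)\rho(\bv_*)\rd\bv_*$. Using the representation \eqref{A} of the collision kernel,
\begin{equation*}
    \bu^\top A(\bv-\bv_*)\bu = C_\gamma |\bv-\bv_*|^{\gamma+2}\left(1 - \frac{(\bu\cdot(\bv-\bv_*))^2}{|\bv-\bv_*|^2}\right) = C_\gamma |\bv-\bv_*|^{\gamma+2}\sin^2\theta,
\end{equation*}
where $\theta$ denotes the angle between $\bu$ and $\bv-\bv_*$. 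This integrand is non-negative, and it vanishes exactly on the set $N := \{\bv_*\in\bbT^d : \bv-\bv_* \parallel \bu\}\cup\{\bv\}$; so the whole question reduces to whether $N$ can carry positive $\rho\,\rd\bv$-mass.

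The hardest step I expect is verifying that $N$ has Lebesgue measure zero in $\bbT^d$. Lifted to the universal cover, $N$ corresponds to the affine line $\bv+\R\bu\subset\R^d$; its image under the smooth projection $\R^d\to\bbT^d=\R^d/\mathbb{Z}^d$ is therefore the smooth image of a one-dimensional manifold. Consequently, even when the line has irrational slope and winds densely, its projection retains Hausdorff dimension at most one, and hence Lebesgue measure zero in $\bbT^d$ whenever $d\geq 2$ (in particular for $d\in\{2,3\}$). This is the one place where the torus geometry, rather than pure linear algebra, enters the argument, so it should be spelled out explicitly.

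With the null-set claim secured, I would conclude by contradiction. Assume $\bu^\top M(\bv)\bu = 0$. Because the integrand $\bu^\top A(\bv-\bv_*)\bu\,\rho(\bv_*)$ is non-negative, it must vanish for Lebesgue-a.e.\ $\bv_*$. Outside $N$ the first factor is strictly positive, so $\rho(\bv_*)=0$ for a.e.\ $\bv_*\in\bbT^d\setminus N$; combined with $|N|=0$ this forces $\rho\equiv 0$ a.e.\ on $\bbT^d$, contradicting $\int_{\bbT^d}\rho\,\rd\bv = 1$. Therefore $\bu^\top M(\bv)\bu > 0$ for every unit $\bu$, proving the proposition. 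Note that the argument does not use the specific dimensions $d=2,3$ anywhere beyond the measure-zero step, so it extends to any $d\geq 2$ with no change.
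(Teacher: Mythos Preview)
Your argument is correct and takes a genuinely different route from the paper's. The paper reduces to $\int_{\bbT^d} A(\bv)\rho(\bv)\,\rd\bv$ by translation, then proceeds by explicit matrix computation: in $d=2$ it writes out the $2\times 2$ matrix, computes its eigenvalues, and invokes the strict Cauchy--Schwarz inequality $\xi_1\xi_3 > \xi_2^2$ (strictness coming implicitly from the fact that an absolutely continuous probability measure cannot be supported on a line); in $d=3$ it verifies Sylvester's criterion by checking the three leading principal minors one by one, again repeatedly using Cauchy--Schwarz. Your approach instead fixes $\bu$, recognises $\bu^\top A(\bz)\bu = C_\gamma|\bz|^{\gamma+2}\sin^2\theta$, and reduces the whole question to the single geometric fact that the projected line $\{\bv_*: \bv-\bv_*\parallel\bu\}$ has Lebesgue measure zero in $\bbT^d$. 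This is more conceptual, uniform in dimension, and makes transparent exactly where the absolute continuity of $\rho\,\rd\bv$ enters, whereas in the paper's proof that hypothesis is buried inside the strict inequalities. The paper's computation, on the other hand, is entirely elementary linear algebra and avoids any measure-theoretic or Hausdorff-dimension reasoning, which some readers may prefer.
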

\begin{proof}
    Since the integration domain is a torus, it suffices to show that $\int_{\bbT^2} A(\bv) \rho(\bv) \rd\bv \succ 0$. Denote 
    \[
     \xi_1=\int_{\bbT^2} \rho v_1^2 |\bv|^{\gamma} \rd\bv \,, \quad 
     \xi_2=\int_{\bbT^2} \rho v_1v_2 |\bv|^{\gamma} \rd\bv \,, \quad 
    \xi_3=\int_{\bbT^2} \rho v_2^2 |\bv|^{\gamma} \rd\bv \,,
    \]
    then we have
    \begin{equation*}
        \int_{\bbT^2} A(\bv) \rho(\bv) \rd\bv = \int_{\bbT^2} |\bv|^{\gamma}
        \begin{bmatrix}
            v_2^2 & -v_1 v_2 \\
            -v_1 v_2 & v_1^2
        \end{bmatrix}
        \rho \rd\bv 
        = \begin{bmatrix}
            \xi_3 & -\xi_2 \\
            -\xi_2 & \xi_1
        \end{bmatrix}
        =: B \,.
    \end{equation*}
    The eigenvalues $\lambda$ of $B$ are given by
    \begin{equation*}
        \begin{split}
            (\lambda-\xi_3)(\lambda-\xi_1) - \xi_2^2=0 
            & \implies \lambda^2 - (\xi_1+\xi_3)\lambda + \xi_1\xi_3 -\xi_2^2=0 \,. \\
            & \implies \lambda = \frac{(\xi_1+\xi_3) \pm \sqrt{(\xi_1+\xi_3)^2 - 4(\xi_1\xi_3 -\xi_2^2)}}{2} \,.
        \end{split}
    \end{equation*}
    Denote $\xi=\int_{\bbT^2} \rho |\bv|^{\gamma+2} d\bv$. Note that $\xi_1 + \xi_3=\xi$ and $\xi_1\xi_3 > \xi_2^2$ by Cauchy-Schwarz inequality. Moreover, the eigenvalues of $B$ are real since $B$ is symmetric. Thus we have
    \begin{equation*}
        \lambda = \frac{\xi \pm \sqrt{\xi^2 - 4(\xi_1\xi_3 -\xi_2^2)}}{2} > 0 \,,
    \end{equation*}
    which proves the positive-definiteness. The proof for $d=3$ is given in Appendix \ref{proof:pd}.
\end{proof}

We now state the main theorem in this section. 
\begin{theorem}[Time evolution of the KL divergence on $\bbT^d$]\label{KL_evolution}
    Let $\tilde{f}_t$ and $f_t$ denote the solutions to the homogeneous Landau equation \eqref{Landau_Continuity} and the score-based Landau equation \eqref{Landau_Score}, respectively. Under the above assumptions, we have
    \begin{equation*}
        \frac{\rd}{\rd t} D_{\operatorname{KL}}(f_t ~||~ \tf_t)
        \leq C D_{\operatorname{KL}}(f_t ~||~ \tf_t) + \frac{4d^2}{\lambda_1} \|A\|_{L^\infty}^2 \int_{\bbT^d} f_t |\delta_t|^2 \rd\bv\,,
    \end{equation*}
    where $\delta_t(\bv):=\nabla\log f_t(\bv) - \bs_t(\bv)$ is the score-matching error, and
    $$C:=\frac{2d}{\lambda_1} \left\| \nabla \cdot A \right\|_{L^\infty}^2 + M\frac{2d^2}{\lambda_1} \left\| A \right\|_{L^\infty}^2$$
    is a constant independent of $f_t$ and $\tf_t$.
    Here, the divergence of a matrix function is applied row-wise.
\end{theorem}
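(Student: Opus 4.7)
The plan is to differentiate $D_{\operatorname{KL}}(f_t \,\|\, \tf_t) = \int_{\bbT^d} f_t \log(f_t/\tf_t) \rd\bv$ in time, use mass conservation of both $f_t$ and $\tf_t$ to kill the stray $\int \partial_t f_t \rd\bv$ contribution, and integrate by parts on $\bbT^d$ (where boundary terms vanish thanks to periodicity). Setting $r_t := \log(f_t/\tf_t)$ and using the two continuity equations in \eqref{Landau_Continuity} and \eqref{Landau_Score}, this should yield the compact form
\[
    \frac{\rd}{\rd t} D_{\operatorname{KL}}(f_t \,\|\, \tf_t) = \int_{\bbT^d} f_t\, \nabla r_t \cdot \bigl(\bU^\delta - \bU[\tf_t]\bigr) \rd\bv,
\]
so the whole problem reduces to dissecting the velocity difference $\bU^\delta - \bU[\tf_t]$ in a way that simultaneously exposes a dissipative direction, the score-matching error $\delta_t$, and the density mismatch $f_t - \tf_t$.

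Next I would introduce a Fokker--Planck-style reformulation: for any smooth probability density $g$, integrating by parts in $\bv_*$ turns the Landau drift into
\[
    \bU[g](\bv) = -(A \ast g)(\bv)\, \nabla \log g(\bv) + \bigl((\nabla\cdot A)\ast g\bigr)(\bv),
\]
where $(A\ast g)(\bv)$ is a $d\times d$ matrix field inheriting $(A\ast g)(\bv) \succeq \lambda_1 I_d$ from Assumption (A.1) because $g$ is a probability density. Since $\bs_t$ is not the gradient of any log-density, this identity only applies to $\bU[f_t]$ and $\bU[\tf_t]$, so I would split
\[
    \bU^\delta - \bU[\tf_t] = \bigl(\bU^\delta - \bU[f_t]\bigr) + \bigl(\bU[f_t] - \bU[\tf_t]\bigr).
\]
The first bracket, by substituting $\bs_t = \nabla \log f_t - \delta_t$, equals $\int A(\bv-\bv_*)\bigl(\delta_t(\bv)-\delta_t(\bv_*)\bigr) f_t(\bv_*)\rd\bv_*$ and contains only $\delta_t$. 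For the second bracket, applying the Fokker--Planck form to both terms and bookkeeping with $\nabla \log f_t - \nabla \log \tf_t = \nabla r_t$ produces three more terms: a dissipative direction $-(A\ast f_t)\nabla r_t$, a density-error term $-(A\ast(f_t-\tf_t))\tbs_t$, and a divergence--kernel term $(\nabla\cdot A)\ast(f_t-\tf_t)$.

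Pairing each piece against $f_t \nabla r_t$, the dissipative term contributes $-\int_{\bbT^d} f_t \nabla r_t^\top (A\ast f_t) \nabla r_t \rd\bv \leq -\lambda_1 \int_{\bbT^d} f_t |\nabla r_t|^2 \rd\bv$ by (A.1) and $\int f_t = 1$. The three remaining error terms are then controlled by a Cauchy--Schwarz--Pinsker--Young chain: Cauchy--Schwarz extracts $\bigl(\int f_t|\nabla r_t|^2\bigr)^{1/2}$ as one factor; the matrix kernels are bounded by $\|A\|_{L^\infty}$ or $\|\nabla\cdot A\|_{L^\infty}$ with a dimensional factor $d$ coming from $|Av|\le d\|A\|_{L^\infty}|v|$; the pointwise bound $|\tbs_t|\le M$ from (A.3) handles the density-error term; and Pinsker's inequality $\|f_t-\tf_t\|_{L^1}^2 \le 2 D_{\operatorname{KL}}(f_t\,\|\,\tf_t)$ converts $L^1$ density differences into KL. A final Young split with parameter $\sim \lambda_1$ absorbs the three $\varepsilon\int f_t|\nabla r_t|^2$ contributions into the dissipation, leaving exactly $C\, D_{\operatorname{KL}}(f_t\,\|\,\tf_t)$ plus $\frac{4d^2}{\lambda_1}\|A\|_{L^\infty}^2\int f_t|\delta_t|^2\rd\bv$ of the stated form.

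The main obstacle, in my view, is choosing the right order of splitting: one must compare $\bU^\delta$ with $\bU[f_t]$ \emph{first} (where the score-matching error $\delta_t$ naturally appears) and only then invoke the Fokker--Planck rewriting to compare $\bU[f_t]$ with $\bU[\tf_t]$ (where $\nabla r_t$ and $f_t-\tf_t$ naturally surface). Without this ordering, the dissipative direction $(A\ast f_t)\nabla r_t$ does not emerge cleanly and one is left trying to simultaneously control the density error and the score-matching error inside a single nonlocal integral. Once the decomposition is in place, the remainder is a careful but mechanical application of Cauchy--Schwarz and Young, with the constants $d$, $\|A\|_{L^\infty}$, $\|\nabla\cdot A\|_{L^\infty}$, and $M$ tracked exactly.
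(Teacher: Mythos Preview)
Your proposal is correct and follows essentially the same approach as the paper's own proof: the paper first derives the identity $\frac{\rd}{\rd t} D_{\operatorname{KL}}(f_t\,\|\,\tf_t)=\int f_t\,\nabla r_t\cdot(\bU^\delta-\bU[\tf_t])\,\rd\bv$ as a separate lemma, then uses the same Fokker--Planck rewriting $\bU[g]=-(A\ast g)\nabla\log g+(\nabla\cdot A)\ast g$ to produce the same four pieces you describe (labeled $I_1$--$I_4$ there), and bounds them by the same combination of the lower bound on $A$, Young's inequality with parameter $\lambda_1/4$, Young's convolution inequality, assumption (A.3), and Csisz\'ar--Kullback--Pinsker. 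Your two-step ordering $(\bU^\delta-\bU[f_t])+(\bU[f_t]-\bU[\tf_t])$ is simply a cleaner way to organize the identical decomposition.
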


Before proving this theorem, we require the following lemma to quantify the KL divergence between two probability densities, both satisfying the continuity equation but with different velocity fields.
\begin{lemma}\label{KL_equality}
    Let $f_t$ and $\tilde f_t$ be solutions to the following continuity equations, respectively: 
    \begin{equation*}
        \partial_t f_t + \nabla \cdot [\bU^\delta(f_t)f_t] = 0 \quad \text{and} \quad \partial_t \tf_t + \nabla \cdot [\bU(\tf_t)\tf_t] = 0 \,.
    \end{equation*}
    Then 
    \begin{equation*}
        \frac{\rd}{\rd t} D_{\operatorname{KL}}(f_t ~||~ \tf_t) = \int_{\bbT^d} f_t \nabla\log\left(\frac{f_t}{\tf_t}\right) [\bU^\delta(f_t)- \bU(\tf_t)] \rd\bv \,.
    \end{equation*}
\end{lemma}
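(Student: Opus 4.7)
The plan is a direct calculation: differentiate $D_{\operatorname{KL}}(f_t\,||\,\tf_t)=\int f_t\log f_t\,\rd\bv - \int f_t\log\tf_t\,\rd\bv$ in time, substitute the two continuity equations for $\partial_t f_t$ and $\partial_t\tf_t$, integrate by parts on the torus (so no boundary terms appear), and rearrange using the identity $\nabla(f_t/\tf_t) = (f_t/\tf_t)\,\nabla\log(f_t/\tf_t)$.

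More concretely, first I would handle $\frac{\rd}{\rd t}\int f_t\log f_t\,\rd\bv$. Its derivative is $\int \partial_t f_t\,(\log f_t + 1)\,\rd\bv$; the $+1$ contribution vanishes because mass is conserved ($\int \partial_t f_t\,\rd\bv=0$). Substituting the continuity equation for $f_t$ and integrating by parts yields $\int f_t\,\bU^\delta(f_t)\cdot\nabla\log f_t\,\rd\bv$. Next I would differentiate $\int f_t\log\tf_t\,\rd\bv$, splitting it as $\int \partial_t f_t\,\log\tf_t\,\rd\bv + \int f_t\,(\partial_t\tf_t/\tf_t)\,\rd\bv$. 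The first piece, after using the continuity equation for $f_t$ and integrating by parts, becomes $\int f_t\,\bU^\delta(f_t)\cdot\nabla\log\tf_t\,\rd\bv$. For the second piece, I would substitute $\partial_t\tf_t=-\nabla\cdot[\bU(\tf_t)\tf_t]$, integrate by parts to move the divergence onto $f_t/\tf_t$, then apply the quotient identity $\nabla(f_t/\tf_t)=(f_t/\tf_t)\nabla\log(f_t/\tf_t)$ to rewrite this piece as $-\int f_t\,\bU(\tf_t)\cdot\nabla\log(f_t/\tf_t)\,\rd\bv$.

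Finally I would combine the three contributions. The two terms involving $\bU^\delta(f_t)$ combine, via $\nabla\log f_t - \nabla\log\tf_t = \nabla\log(f_t/\tf_t)$, into $\int f_t\,\bU^\delta(f_t)\cdot\nabla\log(f_t/\tf_t)\,\rd\bv$, and subtracting the $\bU(\tf_t)$ piece delivers the claimed formula
\begin{equation*}
\frac{\rd}{\rd t} D_{\operatorname{KL}}(f_t\,||\,\tf_t)=\int_{\bbT^d} f_t\,\nabla\log\!\left(\frac{f_t}{\tf_t}\right)\cdot[\bU^\delta(f_t)-\bU(\tf_t)]\,\rd\bv.
\end{equation*}

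There is no real obstacle in this lemma — it is essentially a bookkeeping computation. The only mild subtleties are (i) justifying interchange of $\frac{\rd}{\rd t}$ with $\int$ and the two integrations by parts, both of which are fine under the regularity assumption (A.2) that $f_t,\tf_t\in L^\infty([0,T],W^{2,\infty}(\bbT^d))$ with $f_t,\tf_t$ bounded below away from zero on the compact torus, and (ii) remembering that on the torus all boundary contributions vanish. The one algebraic trick worth flagging is the identity $\nabla(f_t/\tf_t) = (f_t/\tf_t)\nabla\log(f_t/\tf_t)$, which is what converts the second integration-by-parts term into the symmetric form seen in the statement.
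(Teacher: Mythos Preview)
Your proposal is correct and follows essentially the same approach as the paper: a direct time-differentiation of the KL divergence, substitution of the two continuity equations, integration by parts on the torus, and use of the identity $\tf_t\,\nabla(f_t/\tf_t)=f_t\,\nabla\log(f_t/\tf_t)$. The only cosmetic difference is that you split $\log(f_t/\tf_t)=\log f_t-\log\tf_t$ before differentiating, whereas the paper keeps the quotient intact; the underlying steps and the key algebraic identity are identical.
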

\begin{proof}
The formula follows from direct calculation: 
\begin{flalign*}
    \frac{\rd}{\rd t} D_{\operatorname{KL}}(f_t ~||~ \tf_t) 
    & = \frac{\rd}{\rd t} \int_{\bbT^d} f_t \log\left(\frac{f_t}{\tf_t}\right) \rd\bv \\
    & = \int_{\bbT^d} \partial_t f_t \log\left(\frac{f_t}{\tf_t}\right) \rd\bv - \int_{\bbT^d} \left( \frac{f_t}{\tf_t} \right) \partial_t \tf_t \rd\bv \\
    & = -\int_{\bbT^d} \nabla \cdot [\bU^\delta(f_t)f_t] \log\left(\frac{f_t}{\tf_t}\right) \rd\bv + \int_{\bbT^d} \nabla \cdot [\bU(\tf_t)\tf_t] \left(\frac{f_t}{\tf_t}\right) \rd\bv \\
    & = \int_{\bbT^d} \bU^\delta(f_t)f_t \nabla\log\left(\frac{f_t}{\tf_t}\right) \rd\bv - \int_{\bbT^d} \bU(\tf_t)\tf_t \nabla\left(\frac{f_t}{\tf_t}\right) \rd\bv \\
    & = \int_{\bbT^d} \bU^\delta(f_t)f_t \nabla\log\left(\frac{f_t}{\tf_t}\right) \rd\bv - \int_{\bbT^d} \bU(\tf_t)f_t \nabla\log\left(\frac{f_t}{\tf_t}\right) \rd\bv \\
    & = \int_{\bbT^d} f_t \nabla\log\left(\frac{f_t}{\tf_t}\right) [\bU^\delta(f_t)- \bU(\tf_t)] \rd\bv \,.
\end{flalign*}
\end{proof}

We are now prepared to prove the main theorem. 
\begin{proof}[Proof of Theorem~\ref{KL_evolution}]
    Denote
    \[
    K(\bv):=\nabla \cdot A(\bv) = -2|\bv|^{\gamma} \bv \,,
    \]
    where the divergence of a matrix function is applied row-wise, the velocity field of \eqref{Landau_Continuity} can be rewritten as
    \begin{flalign*}
        \bU[\tf_t]
        & = -\int_{\bbT^d} A(\bv-\bv_*) \tf_{t,*} (\nabla\log\tf_t - \nabla_*\log\tf_{t,*}) \rd\bv_* \\
        & = -\int_{\bbT^d} A(\bv-\bv_*) \tf_{t,*} \nabla\log\tf_t \rd\bv_* + \int_{\bbT^d} A(\bv-\bv_*) \nabla_*\tf_{t,*} \rd\bv_* \\
        & = -\int_{\bbT^d} A(\bv-\bv_*) \tf_{t,*} \nabla\log\tf_t \rd\bv_* - \int_{\bbT^d} \nabla_* \cdot A(\bv-\bv_*) \tf_{t,*} \rd\bv_* \\
        & = -\int_{\bbT^d} A(\bv-\bv_*) \tf_{t,*} \nabla\log\tf_t \rd\bv_* + K*\tf_t \,.
    \end{flalign*}
    Here the convolution operation is applied entry-wise. Likewise, the velocity field of \eqref{Landau_Score} rewrites into
    \begin{flalign*}
        \bU^\delta  
        & = -\int_{\bbT^d} A(\bv-\bv_*) f_{t,*} (\bs_t - \bs_{t,*}) \rd\bv_* \\
        & = -\int_{\bbT^d} A(\bv-\bv_*) f_{t,*} (\nabla\log f_t - \nabla_*\log f_{t,*} - \delta_t + \delta_{t,*})\rd\bv_* \\
        & = -\int_{\bbT^d} A(\bv-\bv_*) f_{t,*} \nabla\log f_t \rd\bv_* + K*f_t + \int_{\bbT^d} A(\bv-\bv_*) f_{t,*}(\delta_t - \delta_{t,*})\rd\bv_* \,,
    \end{flalign*}
    where $\delta_t(\bv):=\nabla\log f_t(\bv) - \bs_t(\bv)$.
    By Lemma \ref{KL_equality}, we have
    \begin{flalign*}
        & \frac{\rd}{\rd t} D_{\operatorname{KL}}(f_t ~||~ \tf_t)
        = \int_{\bbT^d} f_t \nabla\log\left(\frac{f_t}{\tf_t}\right) \cdot \left(\bU^\delta  - \bU\right) \rd\bv \\
        = & - \int_{\bbT^d} f_t \nabla\log\left(\frac{f_t}{\tf_t}\right) \cdot \int_{\bbT^d} A(\bv-\bv_*) f_{t,*} \nabla\log f_t \rd\bv_*\rd\bv + \int_{\bbT^d} f_t \nabla\log\left(\frac{f_t}{\tf_t}\right) \cdot \int_{\bbT^d} A(\bv-\bv_*) \tf_{t,*} \nabla\log\tf_t \rd\bv_*\rd\bv \\
        & + \int_{\bbT^d} f_t \nabla\log\left(\frac{f_t}{\tf_t}\right) \cdot K*(f_t-\tf_t)\rd\bv + \int_{\bbT^d} f_t \nabla\log\left(\frac{f_t}{\tf_t}\right) \cdot \int_{\bbT^d} A(\bv-\bv_*) f_{t,*} (\delta_t - \delta_{t,*}) \rd\bv_*\rd\bv \\
        = & \underbrace{\int_{\bbT^d} f_t \nabla\log\left(\frac{f_t}{\tf_t}\right) \cdot \int_{\bbT^d} A(\bv-\bv_*) f_{t,*} \nabla\log \left(\frac{\tf_t}{f_t}\right) \rd\bv_*\rd\bv}_{I_1} 
        + \underbrace{\int_{\bbT^d} f_t \nabla\log\left(\frac{f_t}{\tf_t}\right) \cdot \int_{\bbT^d} A(\bv-\bv_*) (\tf_{t,*}-f_{t,*}) \nabla\log\tf_t \rd\bv_*\rd\bv}_{I_2} \\
        & + \underbrace{\int_{\bbT^d} f_t \nabla\log\left(\frac{f_t}{\tf_t}\right) \cdot K*(f_t-\tf_t) \rd\bv}_{I_3}
        + \underbrace{\int_{\bbT^d} f_t \nabla\log\left(\frac{f_t}{\tf_t}\right) \cdot \int_{\bbT^d} A(\bv-\bv_*) f_{t,*} (\delta_t - \delta_{t,*}) \rd\bv_*\rd\bv}_{I_4} \,.
    \end{flalign*}
    For $I_1$, using assumption (A.1), we have 
    \begin{flalign} \label{I1}
        I_1 
        & = \int_{\bbT^d} f_t \nabla\log\left(\frac{f_t}{\tf_t}\right) \cdot \int_{\bbT^d} A(\bv-\bv_*) f_{t,*} \nabla\log\left(\frac{\tf_t}{f_t}\right) \rd\bv_*\rd\bv \nonumber  \\
        & = -\int_{\bbT^d} \int_{\bbT^d} f_t f_{t,*} \nabla\log\left(\frac{f_t}{\tf_t}\right) A(\bv-\bv_*) \nabla\log \left(\frac{f_t}{\tf_t}\right) \rd\bv_*\rd\bv  \nonumber\\
        & \leq -\lambda_1 \int_{\bbT^d} f_t \left| \nabla\log\left(\frac{f_t}{\tf_t}\right) \right|^2 \rd\bv\,.
    \end{flalign}
    $I_3$ can be estimated as follows:
    \begin{flalign} \label{03}
        I_3 
        & = \int_{\bbT^d} f_t \nabla\log\left(\frac{f_t}{\tf_t}\right) K*(f_t-\tf_t) \rd\bv \nonumber  \\
        & \leq \frac{\lambda_1}{4} \int_{\bbT^d} f_t \left| \nabla\log\left(\frac{f_t}{\tf_t}\right) \right|^2 \rd\bv + \frac{1}{\lambda_1} \int_{\bbT^d} f_t \left| K*(f_t-\tf_t) \right|^2 \rd\bv \nonumber \\
        & \leq \frac{\lambda_1}{4} \int_{\bbT^d} f_t \left| \nabla\log\left(\frac{f_t}{\tf_t}\right) \right|^2 \rd\bv + \frac{1}{\lambda_1} \left\| | K*(f_t-\tf_t) |^2 \right\|_{L^\infty}\,.
    \end{flalign}
    By Young's convolution inequality on torus, one has that
    \begin{equation*}
        \left\| | K*(f_t-\tf_t) |^2 \right\|_{L^\infty} \leq d\left\| K \right\|_{L^\infty}^2 \left\| f_t-\tf_t \right\|_{L^1}^2.
    \end{equation*}
    Further by Csiszar-Kullback-Pinsker inequality on torus, one has that
    \begin{equation*}
        \left\| f_t-\tf_t \right\|_{L^1}^2 \leq 2D_{\operatorname{KL}}(f_t ~||~ \tf_t)\,.
    \end{equation*}
    Putting these into \eqref{03}, we have 
    \begin{equation*}
        I_3 \leq \frac{\lambda_1}{4} \int_{\bbT^d} f_t \left| \nabla\log\left(\frac{f_t}{\tf_t}\right) \right|^2 \rd\bv + \frac{2d}{\lambda_1} \left\| K \right\|_{L^\infty}^2 D_{\operatorname{KL}}(f_t ~||~ \tf_t)\,.
    \end{equation*}
    For $I_2$, we have that 
    \begin{flalign*}
        I_2
        & = \int_{\bbT^d} f_t \nabla\log\left(\frac{f_t}{\tf_t}\right) \int_{\bbT^d} A(\bv-\bv_*) (\tf_{t,*}-f_{t,*}) \nabla\log\tf_t \rd\bv_*\rd\bv \\
        & = \int_{\bbT^d} f_t \nabla\log\left(\frac{f_t}{\tf_t}\right) A*(\tf_t-f_t) \nabla\log\tf_t \rd\bv \\
        & \leq \frac{\lambda_1}{4} \int_{\bbT^d} f_t \left| \nabla\log\left(\frac{f_t}{\tf_t}\right) \right|^2 \rd\bv + \frac{1}{\lambda_1} \int_{\bbT^d} f_t \left| A*(f_t-\tf_t) \nabla\log\tf_t \right|^2 \rd\bv \\
        & \leq \frac{\lambda_1}{4} \int_{\bbT^d} f_t \left| \nabla\log\left(\frac{f_t}{\tf_t}\right) \right|^2 \rd\bv + M^2\frac{d^2}{\lambda_1} \left\| A \right\|_{L^\infty}^2 \left\| f_t - \tf_t \right\|_{L^1}^2 \\
        & \leq \frac{\lambda_1}{4} \int_{\bbT^d} f_t \left| \nabla\log\left(\frac{f_t}{\tf_t}\right) \right|^2 \rd\bv + M^2\frac{2d^2}{\lambda_1} \left\| A \right\|_{L^\infty}^2 D_{\operatorname{KL}}(f_t~||~\tf_t) \,,
    \end{flalign*}
    where the second inequality utilizes assumption (A.3), the third inequality applies Young's convolution inequality, and the final inequality employs the Csiszar-Kullback-Pinsker inequality.

    To estimate $I_4$, first we have 
    \begin{flalign*}
        I_4
        & = \int_{\bbT^d} f_t \nabla\log\left(\frac{f_t}{\tf_t}\right) \int_{\bbT^d} A(\bv-\bv_*) f_{t,*} (\delta_t - \delta_{t,*}) \rd\bv_*\rd\bv \\
        & = \int_{\bbT^d} f_t \nabla\log\left(\frac{f_t}{\tf_t}\right) [(A*f_t)\delta_t - A*(f_t\delta_t)] \rd\bv \\
        & \leq \frac{\lambda_1}{4} \int_{\bbT^d} f_t \left| \nabla\log\left(\frac{f_t}{\tf_t}\right) \right|^2 \rd\bv + \frac{2}{\lambda_1} \int_{\bbT^d} f_t \left| (A*f_t)\delta_t \right|^2\rd\bv + \frac{2}{\lambda_1} \int_{\bbT^d} f_t \left| A*(f_t\delta_t) \right|^2\rd\bv\,.
    \end{flalign*}
    Note that
    \begin{flalign*}
        \int_{\bbT^d} f_t \left| (A*f_t)\delta_t \right|^2 \rd\bv 
        \leq \sum_{i,j=1}^d \|A_{ij}*f_t\|_{L^\infty}^2 \int_{\bbT^d} f_t |\delta_t|^2 \rd\bv
        \leq \sum_{i,j=1}^d \|A_{ij}\|_{L^\infty}^2 \|f_t\|_{L^1}^2 \int_{\bbT^d} f_t |\delta_t|^2 \rd\bv 
        = d^2 \|A\|_{L^\infty}^2 \int_{\bbT^d} f_t |\delta_t|^2 \rd\bv
    \end{flalign*}
    and
    \begin{flalign*}
        & \int_{\bbT^d} f_t \left| A*(f_t \delta_t) \right|^2\rd\bv 
        \leq \| |A*(f_t \delta_t)|^2 \|_{L^\infty} \int_{\bbT^d} f_t \rd\bv
        \leq \sum_{i=1}^d \left\| \left|\sum_{j=1}^d A_{ij} * (f_t \delta_{t,j})\right|^2 \right\|_{L^\infty} \\
        & \leq d \sum_{i=1}^d \left\| \sum_{j=1}^d \left|A_{ij} * (f_t \delta_{t,j})\right|^2 \right\|_{L^\infty}
        \leq d \sum_{i,j=1}^d \left\|A_{ij} * (f_t \delta_{t,j})\right\|_{L^\infty}^2
        \leq d \sum_{i,j=1}^d \|A_{ij}\|_{L^\infty}^2 \|f_t \delta_{t,j}\|_{L^1}^2 \\
        & \leq d^2 \|A\|_{L^\infty}^2 \sum_{j=1}^d \|f_t \delta_{t,j}\|_{L^1}^2
        \leq d^2 \|A\|_{L^\infty}^2 \sum_{j=1}^d \int_{\bbT^d} f_t \delta_{t,j}^2 \rd\bv \int_{\bbT^d} f_t \rd\bv
        = d^2 \|A\|_{L^\infty}^2 \int_{\bbT^d} f_t |\delta_t|^2 \rd\bv\,.
    \end{flalign*}
    Hence
    \[
    I_4 \leq \frac{\lambda_1}{4} \int_{\bbT^d} f_t \left| \nabla\log\left(\frac{f_t}{\tf_t}\right) \right|^2 \rd\bv + \frac{4d^2}{\lambda_1} \|A\|_{L^\infty}^2 \int_{\bbT^d} f_t |\delta_t|^2 \rd\bv\,.
    \]

    Combining the inequalities for $I_1, I_2, I_3, I_4$, one obtains that
    \begin{flalign*}
        \frac{\rd}{\rd t} D_{\operatorname{KL}}(f_t ~||~ \tf_t) 
        & \leq -\frac{\lambda_1}{4} \int_{\bbT^d} f_t \left| \nabla\log\left(\frac{f_t}{\tf_t}\right) \right|^2 \rd\bv 
        + \left[ \frac{2d}{\lambda_1} \left\| K \right\|_{L^\infty}^2 + M^2 \frac{2d^2}{\lambda_1} \left\| A \right\|_{L^\infty}^2 \right] D_{\operatorname{KL}}(f_t ~||~ \tf_t)
        + \frac{4d^2}{\lambda_1} \|A\|_{L^\infty}^2 \int_{\bbT^d} f_t |\delta_t|^2 \rd\bv \\
        & \leq \underbrace{\left(\frac{2d}{\lambda_1} \left\| K \right\|_{L^\infty}^2 + M\frac{2d^2}{\lambda_1} \left\| A \right\|_{L^\infty}^2 \right)}_{C} D_{\operatorname{KL}}(f_t ~||~ \tf_t) + \frac{4d^2}{\lambda_1} \|A\|_{L^\infty}^2 \int_{\bbT^d} f_t |\delta_t|^2 \rd\bv \,.
    \end{flalign*}
    Here $C$ is a constant independent of $f_t$ and $\tf_t$.
\end{proof}

\begin{remark}
    One may improve the above time-dependent estimate to a uniform-in-time estimate by deriving a Logarithmic Sobolev inequality under additional assumptions, a task we defer to future investigations.
\end{remark}

\section{Exact density computation} \label{sec:4}
Although our algorithm (\ref{ISM_loss}--\ref{vup}) does not require density to advance in time, it is still advantageous to compute density since certain quantities, like entropy, rely on density values. In this section, we present an exact density computation formula by deriving the evolution equation for the logarithm of the determinant of the gradient of the transport map, i.e., $\det \nabla_\bV \bT(\bV,t)$. This equation, together with the change of variable formula,
\begin{equation*}
    \tf_t(\bT(\bV,t)) = \frac{f_0(\bV)}{|\det \nabla_\bV \bT(\bV,t)|}\,,
\end{equation*}
gives rise to the density along the particle trajectories. 

More precisely, recall the flow map $\bT(\cdot,t)$ corresponding to the Landau equation in \eqref{flow_map}: 
\begin{equation} \label{flow_map2}
    \frac{\rd}{\rd t} \bT(\bV,t) = -\underbrace{\int_{\R^d} A(\bT(\bV, t)-\bT(\bV_*, t)) \left[\nabla \log\tf_t (\bT(\bV, t)) - \nabla \log\tf_t(\bT(\bV_*, t))\right] f_0(\bV_*) \rd \bV_*}_{G(\bT(\bV, t))} \,.
\end{equation}
In practice, computing $\det \nabla_\bV T(\bV,t)$ poses a significant bottleneck due to its cubic cost with respect to the dimension of $\bV$. To address this issue, inspired by the concept of continuous normalizing flow \cite{neuralode, grathwohl2018scalable, lee2023deep}, we derive the following proposition concerning the computation of the evolution of the logarithm of the determinant. This extends the instantaneous change of variable formula beyond the classical continuity equation.

\begin{proposition}\label{logdet} Assume the transport map $\bT(\cdot, t)$ from \eqref{flow_map2} is invertible, then the log determinant of its gradient satisfies the following evolution equation:
\begin{equation} \label{00}
    \frac{\rd}{\rd t} \log |\det \nabla_\bV \bT(\bV,t)| = -\int_{\R^d} \nabla \cdot \left[A(\bT(\bV,t)-\bT(\bV_*,t)) \left(\tbs_t(\bT(\bV,t)) - \tbs_t(\bT(\bV_*,t))\right)\right] f_0(\bV_*) \rd\bV_* \,.
\end{equation}
For the particular form of $A$ in \eqref{A}, it becomes 
\begin{equation} \label{01}
    \begin{split}
        \frac{\rd}{\rd t} \log |\det \nabla_\bV \bT(\bV,t)|
        = & -C_{\gamma} \int_{\mathbb{R}^{d}} \big\{ |\bT(\bV,t)-\bT(\bV_*,t)|^{\gamma+2} \\ 
        & \nabla \cdot \left[\Pi(\bT(\bV,t)-\bT(\bV_*,t)) \left(\tbs_t(\bT(\bV,t)) - \tbs_t(\bT(\bV_*,t))\right)\right] \big\} f_0(\bV_*) \rd\bV_* \,. 
    \end{split}
\end{equation}
\end{proposition}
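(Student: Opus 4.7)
The plan is to reduce the evolution of $\log |\det \nabla_\bV \bT(\bV, t)|$ to a trace via Jacobi's formula, then collapse that trace to a divergence of the driving velocity field via the chain rule and cyclicity. By Jacobi's identity for the derivative of a determinant (after exchanging $\rd/\rd t$ with $\nabla_\bV$), we have
\begin{equation*}
\frac{\rd}{\rd t} \log |\det \nabla_\bV \bT(\bV, t)| = \operatorname{tr}\left[ (\nabla_\bV \bT(\bV,t))^{-1} \nabla_\bV \dot{\bT}(\bV, t) \right] \,,
\end{equation*}
where the invertibility of $\nabla_\bV \bT$ is precisely the standing hypothesis.

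Next, I reinterpret the right-hand side of \eqref{flow_map2} as $-G_t(\bT(\bV, t))$, where the vector field
\begin{equation*}
G_t(\bv) := \int_{\R^d} A(\bv - \bT(\bV_*, t)) \bigl[\tbs_t(\bv) - \tbs_t(\bT(\bV_*, t))\bigr] f_0(\bV_*) \rd \bV_*
\end{equation*}
is a function of the Eulerian variable $\bv$ only, since the dummy variable $\bV_*$ is independent of $\bV$. The chain rule then gives $\nabla_\bV \dot{\bT}(\bV, t) = -(\nabla_\bv G_t)(\bT(\bV, t)) \cdot \nabla_\bV \bT(\bV, t)$, and the cyclic property of the trace cancels the conjugation by $\nabla_\bV \bT$:
\begin{equation*}
\frac{\rd}{\rd t} \log |\det \nabla_\bV \bT(\bV, t)| = -\operatorname{tr}\bigl[(\nabla_\bv G_t)(\bT(\bV, t))\bigr] = -(\nabla_\bv \cdot G_t)(\bT(\bV, t)) \,.
\end{equation*}
Interchanging the divergence with the $\bV_*$-integral then yields \eqref{00}.

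For \eqref{01}, I substitute the explicit form $A(\bz) = C_\gamma |\bz|^{\gamma+2} \Pi(\bz)$ with $\bz = \bv - \bv_*$ and apply the product rule inside the divergence. Writing $g := \tbs_t(\bv) - \tbs_t(\bv_*)$,
\begin{equation*}
\nabla_\bv \cdot \bigl[C_\gamma |\bz|^{\gamma+2} \Pi(\bz) g\bigr] = C_\gamma \bigl[\nabla_\bv |\bz|^{\gamma+2}\bigr] \cdot \Pi(\bz) g + C_\gamma |\bz|^{\gamma+2} \nabla_\bv \cdot\bigl[\Pi(\bz) g\bigr] \,.
\end{equation*}
The first term vanishes identically: $\nabla_\bv |\bz|^{\gamma+2} = (\gamma+2)|\bz|^\gamma \bz$ is parallel to $\bz$, while $\Pi(\bz) g$ lies in the orthogonal complement of $\bz$ by definition of the projector, so their inner product is zero. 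This leaves precisely \eqref{01}.

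The most delicate point is the interchange of $\nabla_\bv$ with the $\bV_*$-integral, especially in the physically relevant soft-potential regime $\gamma < 0$, where $A$ develops a singularity on the diagonal $\bT(\bV, t) = \bT(\bV_*, t)$. Under assumptions analogous to (A.1) (e.g.\ on the torus, with $A$ bounded away from zero and infinity) this interchange and the chain rule above are routine; in the singular or unbounded setting a mollification-and-limit argument, or a pointwise a.e.\ interpretation away from collision, would be needed, but is not essential to the formal derivation sketched above.
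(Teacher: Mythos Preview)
Your proof is correct and follows essentially the same route as the paper: Jacobi's formula, chain rule to factor out $\nabla_\bV\bT$, cyclicity of the trace to cancel the conjugation, and the projection property of $\Pi$ to kill the $\nabla|\bz|^{\gamma+2}$ term when passing from \eqref{00} to \eqref{01}. Your added remark on the interchange of $\nabla_\bv$ with the integral is a fair caveat but goes beyond what the paper treats, which keeps the derivation formal.
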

\begin{proof}
    Since we assume $\bT(\cdot,t)$ is invertible, i.e. $\det \nabla_\bV \bT(\bV,t) \not= 0$, then
    \begin{flalign*}
        \frac{\rd}{\rd t} \log |\det \nabla_\bV \bT(\bV,t)| 
        = & \operatorname{Tr}\left((\nabla_\bV \bT(\bV,t))^{-1} \frac{\rd}{\rd t} \nabla_\bV \bT(\bV,t)\right) \\
        = & \operatorname{Tr}\left((\nabla_\bV \bT(\bV,t))^{-1} \nabla_\bV \frac{\rd}{\rd t} \bT(\bV,t)\right) \\
        = & -\operatorname{Tr}\left((\nabla_\bV \bT(\bV,t))^{-1} \nabla_\bV G(\bT(\bV, t)) \right) \\
        = & -\operatorname{Tr}\left((\nabla_\bV \bT(\bV,t))^{-1} \nabla G(\bT(\bV, t)) \nabla_\bV \bT(\bV,t)\right) \\
        = & -\operatorname{Tr}\left(\nabla G(\bT(\bV, t))\right) \\
        = & -\nabla \cdot G(\bT(\bV, t)) \\
        = & -\int_{\R^d} \nabla \cdot \left[A(\bT(\bV,t)-\bT(\bV_*,t)) \left(\tbs_t(\bT(\bV,t)) - \tbs_t(\bT(\bV_*,t))\right)\right] f_0(\bV_*) \rd\bV_* \,.
    \end{flalign*}
    Here the first equality uses Jacobi's identity. To go from \eqref{00} to \eqref{01}, we use the projection property of the matrix $\Pi$ in the form of $A$.
\end{proof}

\begin{remark}
    If the collision kernel $A(z)=I_d$, then
    \begin{flalign*}
         \frac{\rd}{\rd t} \log |\det \nabla_\bV \bT(\bV,t)| 
         = -\int_{\R^d} \nabla \cdot \left(\tbs_t(\bT(\bV,t)) - \tbs_t(\bT(\bV_*,t))\right) f_0(\bV_*) \rd\bV_* 
         = -\nabla \cdot \tbs_t(\bT(\bV,t)) \,, 
    \end{flalign*}
    which reduces to the classical case, see for instance \cite[Equation (4c)]{lee2023deep}.
\end{remark}

For a more straightforward implementation, we compute the divergence term in Proposition \ref{logdet} analytically, and summarize it in the following corollary.
\begin{corollary}\label{logdet_coro}
Expanding the divergence term in equation \eqref{01}, we get:
\begin{equation*}
    \begin{split}
        \frac{\rd}{\rd t} \log |\det \nabla_\bV \bT(\bV,t)| 
        & = - \int_{\mathbb{R}^{d}} \big[ A(\bT(\bV,t)-\bT(\bV_*,t)) : \nabla \tbs_t(\bT(\bV,t))^\top -C_{\gamma} (d-1) \\
        & |\bT(\bV,t)-\bT(\bV_*,t)|^{\gamma} (\bT(\bV,t)-\bT(\bV_*,t)) \cdot \left(\tbs_t(\bT(\bV,t)) - \tbs_t(\bT(\bV_*,t))\right) \big] f_0(\bV_*) \rd\bV_* \,,
    \end{split}
\end{equation*}
where $A:B := \sum_{ij}A_{ij}B_{ij}$.
\end{corollary}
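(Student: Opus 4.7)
The plan is to expand the divergence in Proposition \ref{logdet} by the product rule for a matrix-vector field, and then to compute $\nabla \cdot A$ in closed form from the explicit structural form of $A$ in \eqref{A}.

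First I would introduce the shorthand $\boldsymbol{w} := \bT(\bV,t) - \bT(\bV_*,t)$ and $\boldsymbol{h} := \tbs_t(\bT(\bV,t)) - \tbs_t(\bT(\bV_*,t))$, and note that the divergence in \eqref{01} is taken with respect to the $\bv$-argument only (since $\bV_*$ is the outer integration variable). The product rule then yields
\begin{equation*}
\nabla \cdot [A(\boldsymbol{w})\boldsymbol{h}] = [(\nabla \cdot A)(\boldsymbol{w})] \cdot \boldsymbol{h} + A(\boldsymbol{w}) : (\nabla \boldsymbol{h})^{\top},
\end{equation*}
where $\nabla \cdot A$ is applied row-wise. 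Since $\boldsymbol{h}$ depends on $\bv = \bT(\bV,t)$ only through the first summand $\tbs_t(\bT(\bV,t))$, one has $\nabla \boldsymbol{h} = \nabla \tbs_t(\bT(\bV,t))$, which immediately produces the $A : \nabla \tbs_t^{\top}$ contribution in the stated identity.

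Next I would compute $\nabla \cdot A$ directly from \eqref{A} by writing $A_{ij}(\bz) = C_\gamma(|\bz|^{\gamma+2}\delta_{ij} - |\bz|^{\gamma} z_i z_j)$ and summing the derivatives over $j$. After simplification the two radial contributions combine into $(\nabla \cdot A)(\bz) = -C_\gamma(d-1)|\bz|^{\gamma}\bz$, consistent with the earlier formula $K(\bv) = \nabla \cdot A(\bv)$ in the case $d=3$. This is the only non-routine calculation, and is where the projection structure of $A$ is essential: it is the cancellation between the derivative of $|\bz|^{\gamma+2}$ and that of $|\bz|^{\gamma}\bz \otimes \bz$ that produces the factor $d-1$ displayed in the corollary.

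Substituting both pieces back into the identity of Proposition \ref{logdet} and tracking the overall minus sign from \eqref{00} then yields the claim. The main subtlety to watch out for is the bookkeeping of index conventions in the Frobenius contraction and the asymmetry between the two evaluations of $\tbs_t$: only $\tbs_t(\bT(\bV,t))$ is differentiated by $\nabla_\bv$, which is precisely why the transpose $\nabla\tbs_t(\bT(\bV,t))^{\top}$ (and not a symmetrized combination of $\nabla\tbs_t(\bT(\bV,t))$ and $\nabla\tbs_t(\bT(\bV_*,t))$) appears in the final formula.
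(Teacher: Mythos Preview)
Your proposal is correct and follows essentially the same route as the paper: a matrix--vector product rule followed by an explicit computation of the row-wise divergence of the kernel. The only organizational difference is that the paper starts from \eqref{01}, applies the product rule to $\Pi(\bz)\boldsymbol{h}$, and computes $\nabla\cdot\Pi(\bz)=-(d-1)\bz/|\bz|^2$ before multiplying back by $C_\gamma|\bz|^{\gamma+2}$, whereas you work directly with $A$ from \eqref{00} and compute $\nabla\cdot A(\bz)=-C_\gamma(d-1)|\bz|^\gamma\bz$; these are the same calculation up to where the scalar factor $|\bz|^{\gamma+2}$ is pulled in.
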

\begin{proof}
    The proof is given in Appendix \ref{app_proof}.
\end{proof}

As with the particle dynamics \eqref{particle0}, the log determinant obtained from the above formula admits a particle representation. Recall that we have $N$ particles started with velocities $\{\bV_i\}_{i=1}^N \stackrel{i.i.d.}\sim f_0$, and their later velocities are denoted as $\bv_i(t) :=\bT(\bV_i, t)$, then we have, along the trajectory of the $i$-th particle: 
\begin{equation}\label{logdet_particle}
    \begin{split}
     & \frac{\rd}{\rd t} \log |\det \nabla_\bV T(\bV_i,t)| 
     = - \frac{1}{N} \sum_{j=1}^N \big[ A(\bv_i(t)-\bv_j(t)) : \nabla\bs_t(\bv_i(t))^\top \\
     & \hspace{4cm} - C_{\gamma}(d-1)|\bv_i(t)-\bv_j(t)|^{\gamma} (\bv_i(t)-\bv_j(t)) \cdot (\bs_t(\bv_i(t)) - \bs_t(\bv_j(t))) \big] \,.
    \end{split}
\end{equation}
In practice, the time discretization of equation \eqref{logdet_particle} is also performed using the forward Euler method.

Now we summarize the procedure of the score-based particle method with density computation in Algorithm \ref{algorithm_densitiy}.

\begin{algorithm}[H]
\caption{Score-based particle method with density computation}
\label{algorithm_densitiy}
    \begin{algorithmic}[1]
         \REQUIRE $N$ initial particles $\{\bV_i\}_{i=1}^N \stackrel{i.i.d.}\sim f_0$ and densities evaluated at the location of particles $f_0(\bV_i)$; time step $\Delta t$ and the total number of time steps $N_T$; error tolerance $\delta$ for the initial score-matching; max iteration number $I_{\max}$ for the implicit score-matching.
        \ENSURE Score neural networks $\bs_{n-1}$, particles $\{\bv_i^n\}_{i=1}^N$, and density values $\{f_n(\bv_i^n)\}_{i=1}^N$ for all $n=1, \cdots, N_T$. 
        
        \FOR{$n=0,\cdots,N_T-1$}
        \STATE  Use Algorithm~\ref{algorithm_no_densitiy} (line 1--4, 9--14) to learn the score $\bs_n$.
            \FOR{$i=1,\cdots,N$}
                \STATE obtain $\bv^{n+1}_i$ from $\bv^n_i$ via \eqref{vup}.
                \STATE $l_i^{n+1} = -\Delta t \frac1N \sum_{j=1}^{N} \left[ A(\bv_i^n-\bv_j^n) : \nabla\bs_n(\bv_i^n)^\top - C_{\gamma}{(d-1)}|\bv_i^n-\bv_j^n|^{\gamma} (\bv_i^n-\bv_j^n) \cdot (\bs_n(\bv_i^n)-\bs_n(\bv_j^n)) \right]$.
                \STATE Compute density along particle trajectory: $f_{n+1}(\bv_i^{n+1}) = \frac{f_n(\bv_i^n)}{\exp(l_i^{n+1})}$.
            \ENDFOR
        \ENDFOR
    \end{algorithmic}
\end{algorithm}

\section{Numerical results}\label{sec:5}
In this section, we showcase several numerical examples using the proposed score-based particle method in both Maxwell and Coulomb cases. To visualize particle density and compare it with the analytical or reference solution, we employ two approaches. One involves kernel density estimation (KDE), akin to the blob method for the deterministic particle method, as outlined in \cite{carrillo2020particle}. This is achieved by convolving the particle solution with the Gaussian kernel $\psi_\varepsilon$: 
\begin{equation*}
    f_n^{\operatorname{kde}}(\bv) := \frac{1}{N}\sum_{i=1}^N \psi_\varepsilon(\bv-\bv_i^n) \,.
\end{equation*}
The other approach is to apply our Algorithm \ref{algorithm_densitiy}, enabling us to directly obtain the density from those evolved particles.

We would like to highlight the efficiency gain in our approach in contrast to \cite{carrillo2020particle}, which primarily lies in the computation of the score. In \cite{carrillo2020particle}, the score is adjusted as the gradient of the variational derivative of regularized entropy, involving a double sum post discretization: 
\begin{equation} \label{double}
    \nabla \frac{\delta \mathcal{H}^N_\varepsilon}{\delta \tf_t}(\bv_i(t)) := \sum_{l=1}^N h^d \nabla \psi_\varepsilon (\bv_i(t) -\bv_l^c) \log \left(\sum_{k=1}^N w_k \psi_\varepsilon(\bv_l^c-\bv_k(t)) \right) \,,
\end{equation}
where $h$ is the mesh size, $\bv_i^c$ is the center of each square of the mesh.
On the contrary, in our present approach, we obtain the score using a neural network trained via the score-matching technique, markedly amplifying efficiency. Details will be provided in Section \ref{sec5.4}.

\subsection{Example 1: 2D BKW solution for Maxwell molecules}\label{sec5.1}
For the initial two tests in this and next subsections, we consider the BKW solution in both 2D and 3D to verify the accuracy of our method. This solution stands as one of the few analytical solutions available for the Landau equation. Further details regarding this solution can be found in \cite[Appendix A]{carrillo2020particle}. 

Consider the collision kernel
\begin{equation*}
    A(\bz)=\frac{1}{16}(|\bz|^2I_d - \bz \otimes \bz) \,,
\end{equation*}
then an exact solution of \eqref{Landau_Homogeneous} can be given by 
\begin{equation*}
    \tf_t(\bv) = \frac{1}{2\pi K} \exp\left( -\frac{|\bv|^2}{2K} \right)\left( \frac{2K-1}{K} + \frac{1-K}{2K^2}|\bv|^2 \right) \,, \quad K=1-\exp(-t/8)/2 \,.
\end{equation*}

\noindent \textit{Setting}. \quad
In the experiment, we set $t_0=0$ and compute the solution until $t_{\operatorname{end}}=5$. The time step is $\Delta t=0.01$. The total number of particles are set to be $N=150^2$, initially i.i.d. sampled from $f_0(\bv)=\frac{1}{\pi}|\bv|^2 e^{-|\bv|^2}$. We use rejection sampling to generate samples in the first quadrant and fill the other quadrants by symmetry. The score $\bs_t$ is parameterized as a fully-connected neural network with $3$ hidden layers, $32$ neurons per hidden layer, and \texttt{swish} activation function \cite{ramachandran2017searching}. The biases of the hidden layer are set to zero initially, whereas the weights of the hidden layer are initialized using a truncated normal distribution with a variance of 1/\texttt{fan\_in}, in accordance with the recommendations outlined in \cite{ioffe15}. We train the neural networks using \texttt{Adamax} optimizer \cite{kingma2017adam} with a learning rate of $\eta=10^{-4}$, loss tolerance $\delta=5 \times 10^{-5}$ for the initial score-matching, and the max iteration number $I_{\max}=25$ for the following implicit score-matching.  
\\
\\
\noindent \textit{Comparison}. \quad 
We first compare the learned score with the analytical score
\begin{equation*}
    \nabla \log \tf_t(\bv) = \left( -\frac{1}{K} + \frac{1-K}{(2K-1)K + \frac{1-K}{2}|\bv|^2} \right) \bv \,.
\end{equation*}
In Fig. \ref{fig1_score}, we present scatter plots of the learned score and analytical score from different viewing angles at time $t=1$ and $t=5$. Here, $score_x$ and $score_y$ refer to the $x$ and $y$ components of the score function, respectively. The locations of particles at different time are displayed in Fig. \ref{fig_1_par_loc}.

To measure the accuracy of the learned score over time, we measure the goodness of fit using the relative Fisher divergence defined by
\begin{equation} \label{rF}
    \frac{\int_{\R^d} | \bs_t(\bv) - \nabla \log \tf_t(\bv) |^2 f_t(\bv) \rd\bv}{\int_{\R^d} | \nabla\log \tf_t(\bv) |^2 f_t(\bv) \rd\bv} 
    \approx
    \frac{\sum_{i=1}^N | \bs_n(\bv_i^n) - \nabla \log \tf_{t_n}(\bv_i^n) |^2}{\sum_{i=1}^N | \nabla\log \tf_{t_n}(\bv_i^n) |^2} \,.
\end{equation}
This metric is plotted in Fig. \ref{fig1_fisher}, which demonstrates that the learned score closely matches the analytical score throughout the simulation duration. 

To further check the conservation and entropy decay properties of the method, we plot the time evolution of the kinetic energy in Fig. \ref{fig1_energy} and the entropy decay rate in Fig. \ref{fig1_entropy}. The energy is conserved up to a small error while the entropy decay rate matches the analytical one (computed using \eqref{entropy_decay_rate} with quadrature rule).

Furthermore, as is done in \cite{carrillo2020particle}, we reconstruct the solution via KDE on the computational domain $[-L, L]^2$ with $L = 4$, and uniformly divide the domain into $n=100^2$ meshes $\{Q_i\}_{i=1}^n$. The bandwidth of the Gaussian kernel is chosen to be $\varepsilon=0.15$. In Fig. \ref{fig1_L2}, we track the discrete relative $L^2$-error between the analytical solution and the reconstructed solution defined by
\begin{equation} \label{L2}
    \frac{\sqrt{\sum_{i=1}^n |f_t^{\operatorname{kde}}(\bv_i^c) - \tf_t(\bv_i^c)|^2}}{\sqrt{\sum_{i=1}^n |\tf_t(\bv_i^c)|^2}} \,,
\end{equation}
where $\bv_i^c$ is the center of $Q_i$. Finally, in Fig. \ref{fig1_kde}, we depict the slices of the solutions at different time. The plots demonstrate a close alignment between the reconstructed solutions and the analytical solutions.

\begin{figure}
    \centerline{\includegraphics[scale=0.55]{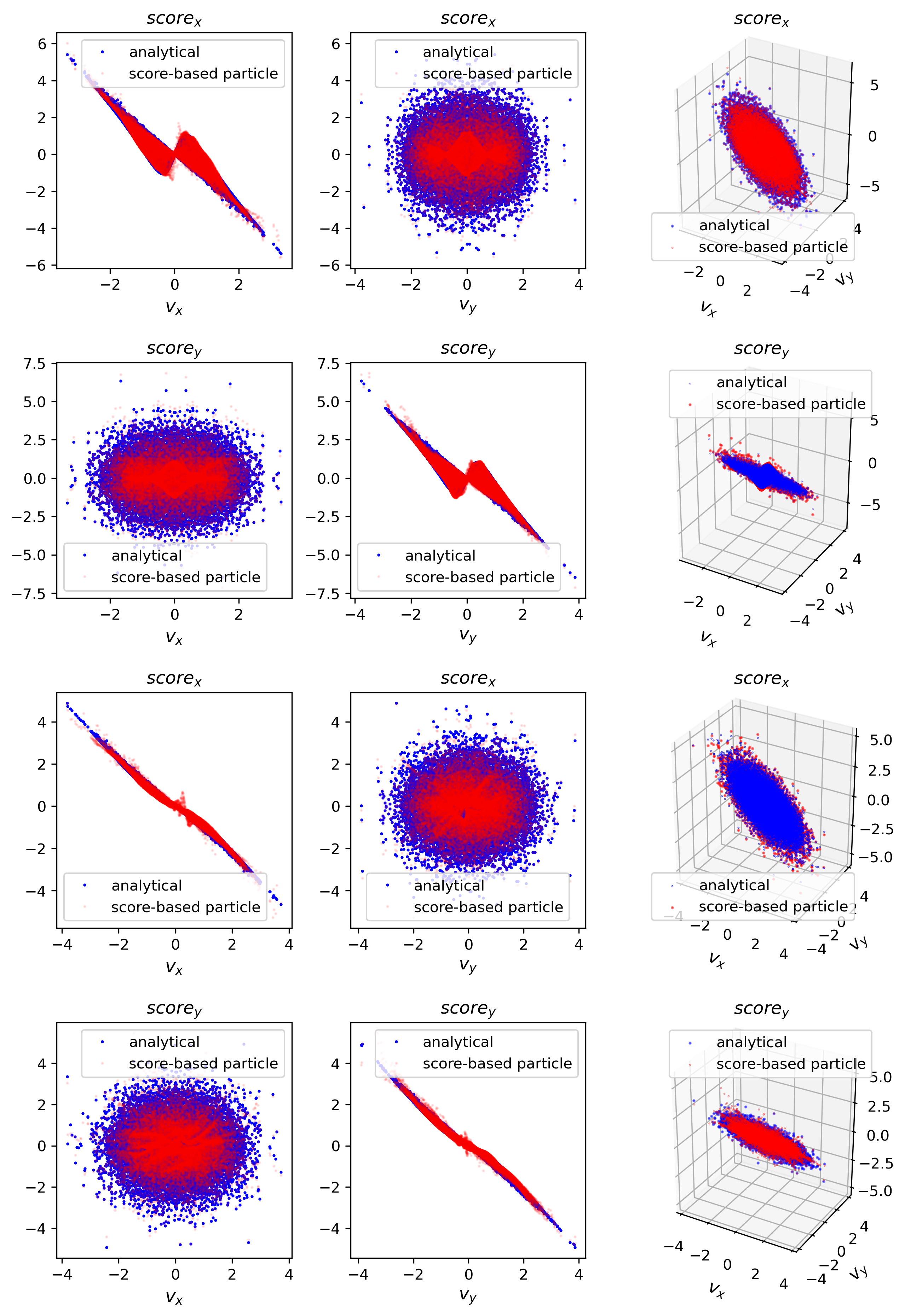}}
    \caption{Score visualization. The first two rows are at $t=1$, the last two rows are at $t=5$. }
    \label{fig1_score}
\end{figure}

\begin{figure}
    \centerline{\includegraphics[scale=0.5]{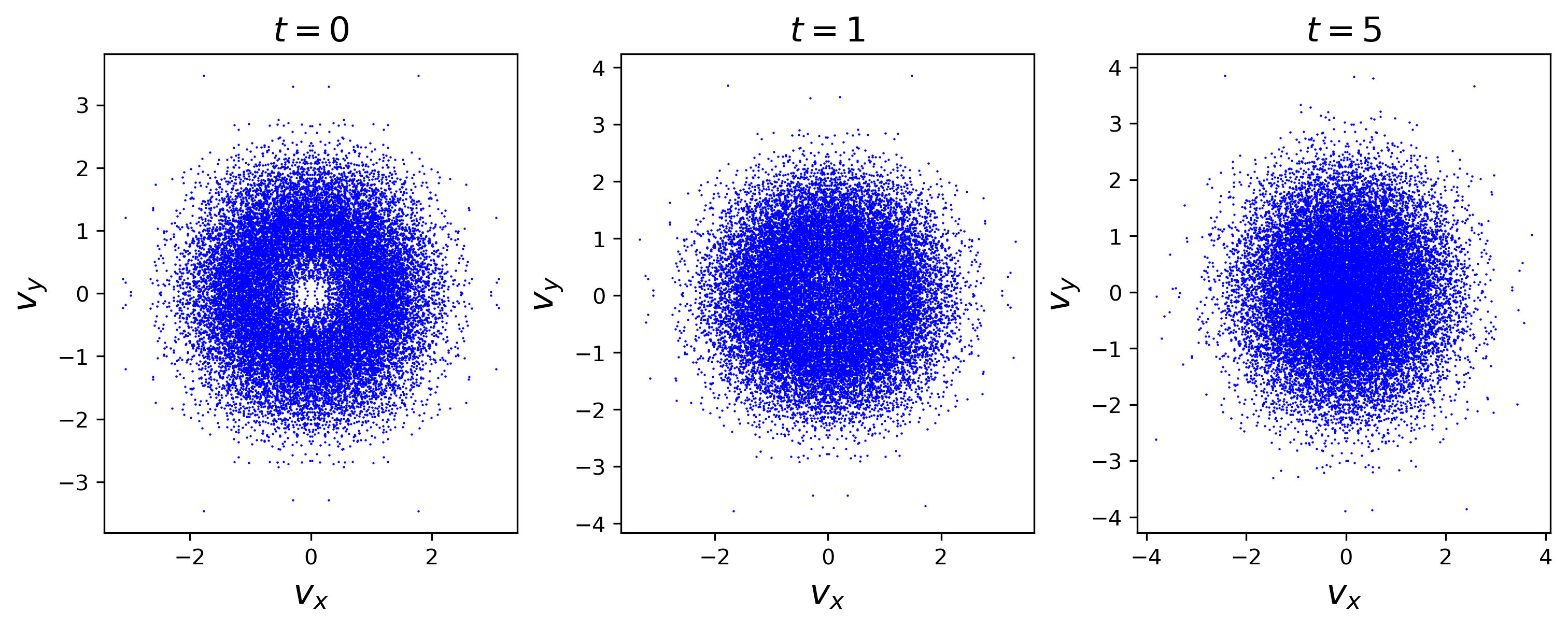}}
    \caption{The location of particles at $t=0, 1$, and $5$}.
    \label{fig_1_par_loc}
\end{figure}

\begin{figure}
    \centering
    \subfloat[Time evolution of the relative Fisher divergence \eqref{rF}.]{{\includegraphics[scale=0.42]{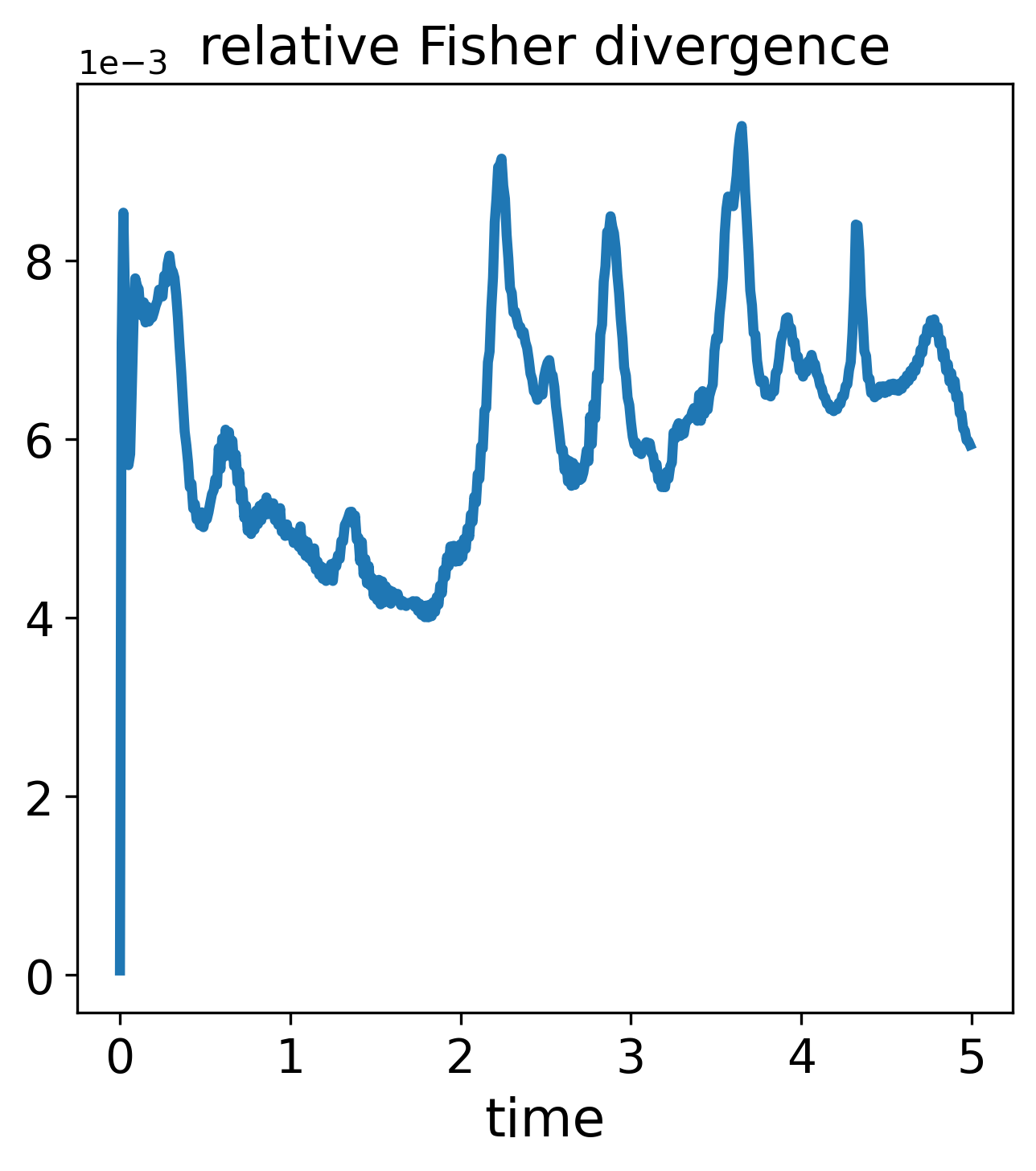}}\label{fig1_fisher}}
    \hspace{4em}
    \subfloat[Time evolution of the kinetic energy. ]{{\includegraphics[scale=0.42]{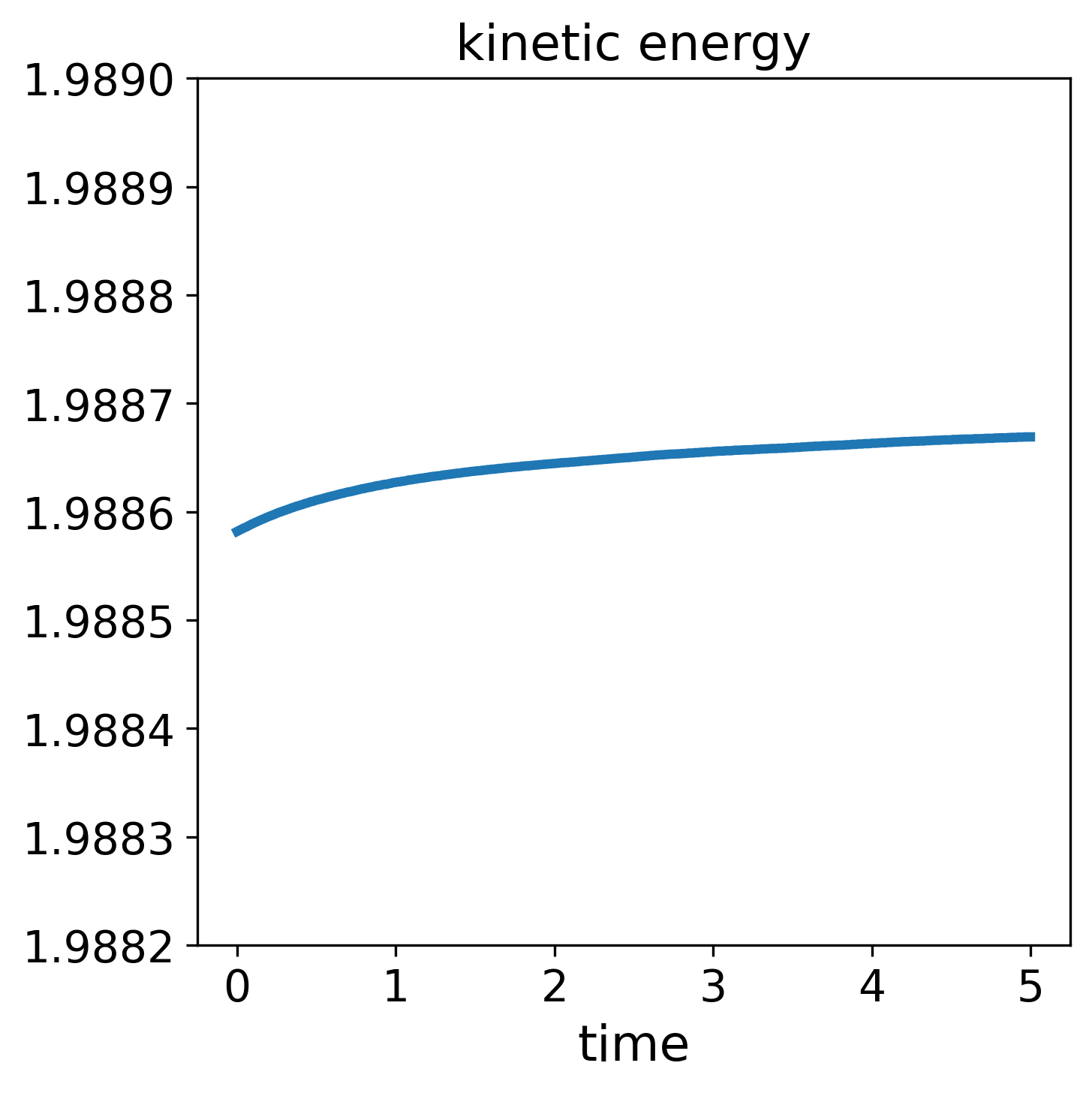}}\label{fig1_energy}}
    
    \vspace{1em}
    
    \subfloat[The evolution of the entropy decay rate.]{{\includegraphics[scale=0.42]{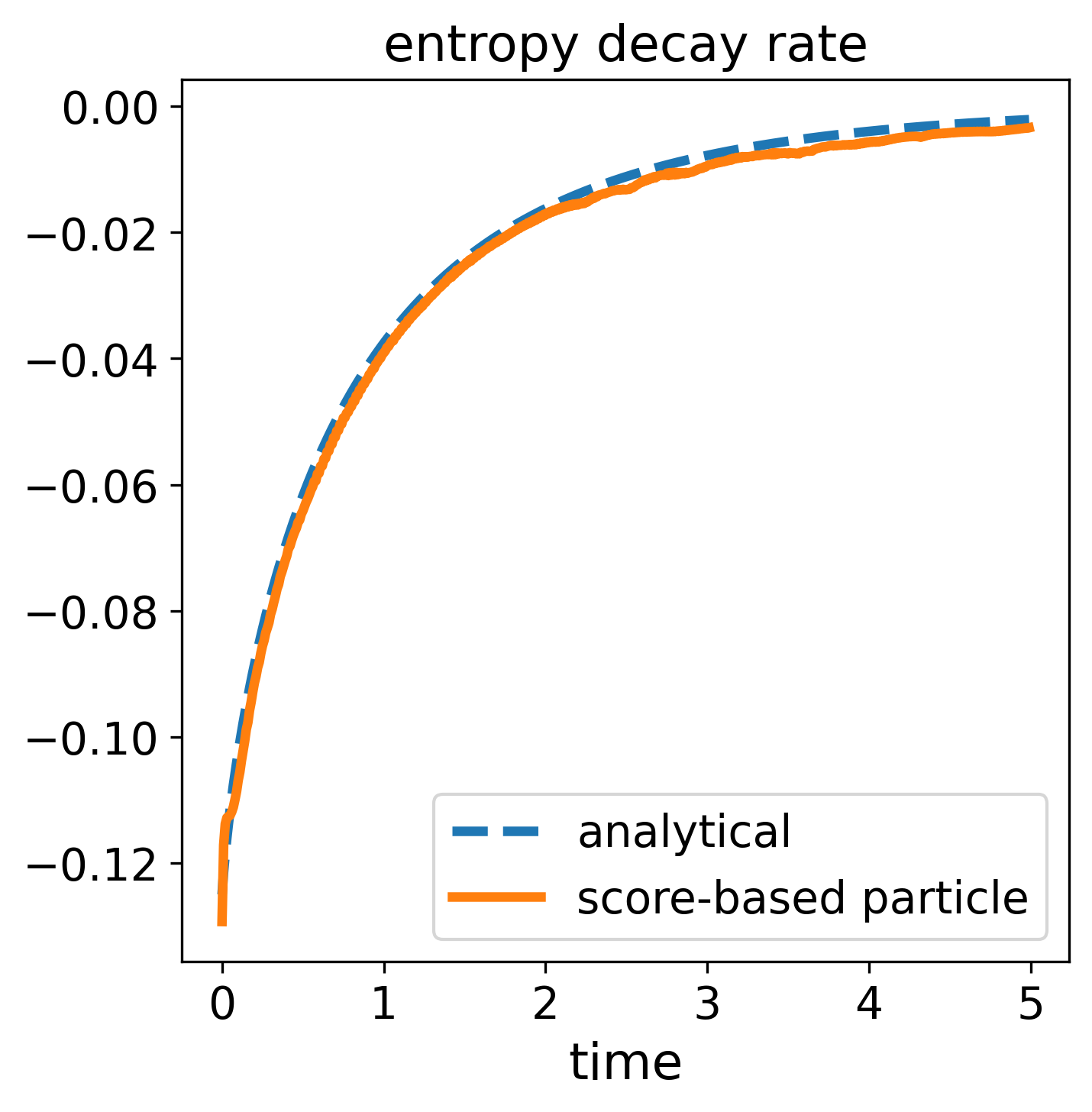}}\label{fig1_entropy}}
    \hspace{4em}
    \subfloat[Time evolution of the relative $L^2$-error \eqref{L2} of reconstructed density.]{{\includegraphics[scale=0.42]{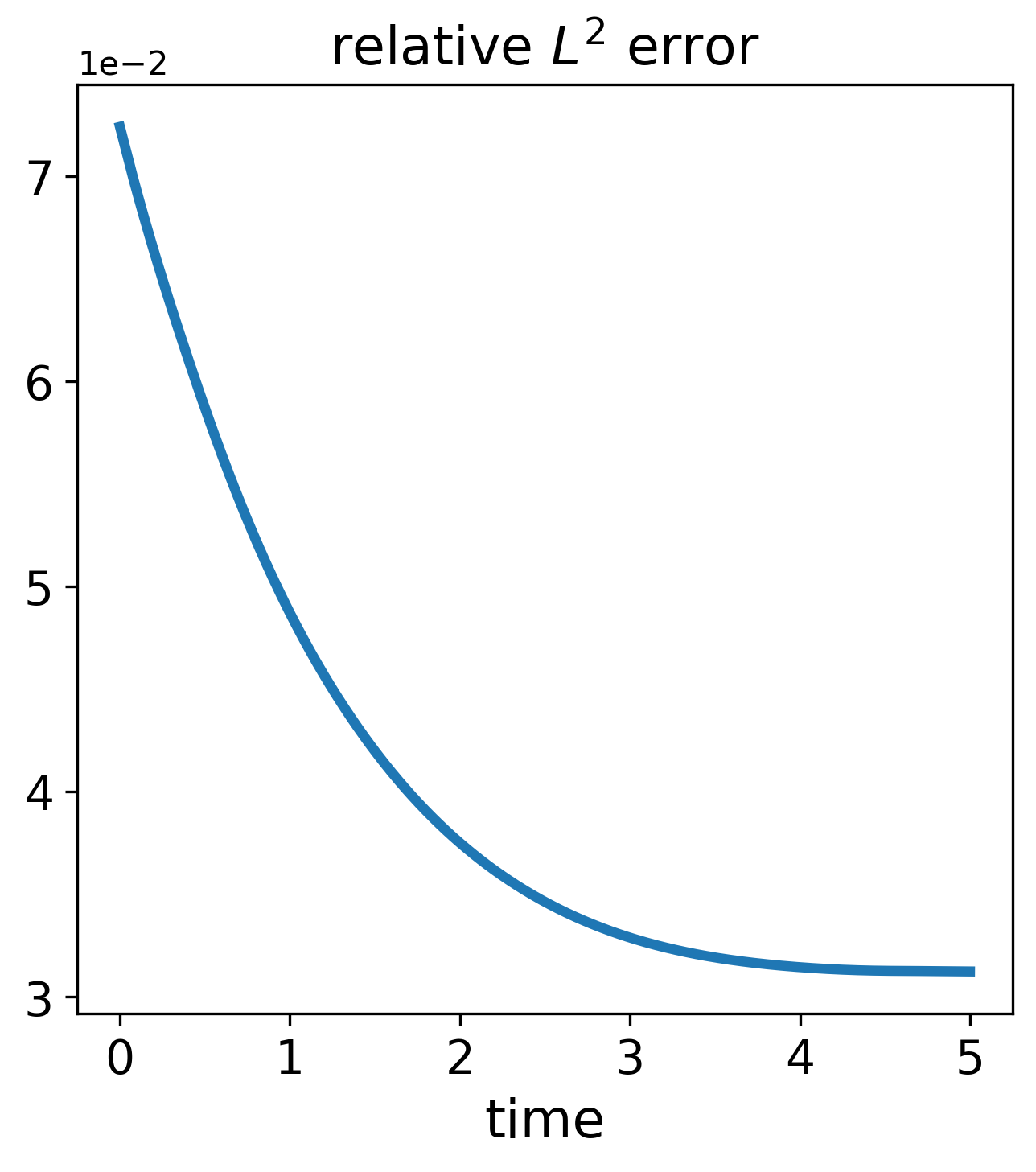}}\label{fig1_L2}}
    \caption{Quantitative comparisons between the numerical solution and analytical solution.}
\end{figure}

\begin{figure}
    \centerline{\includegraphics[scale=0.55]{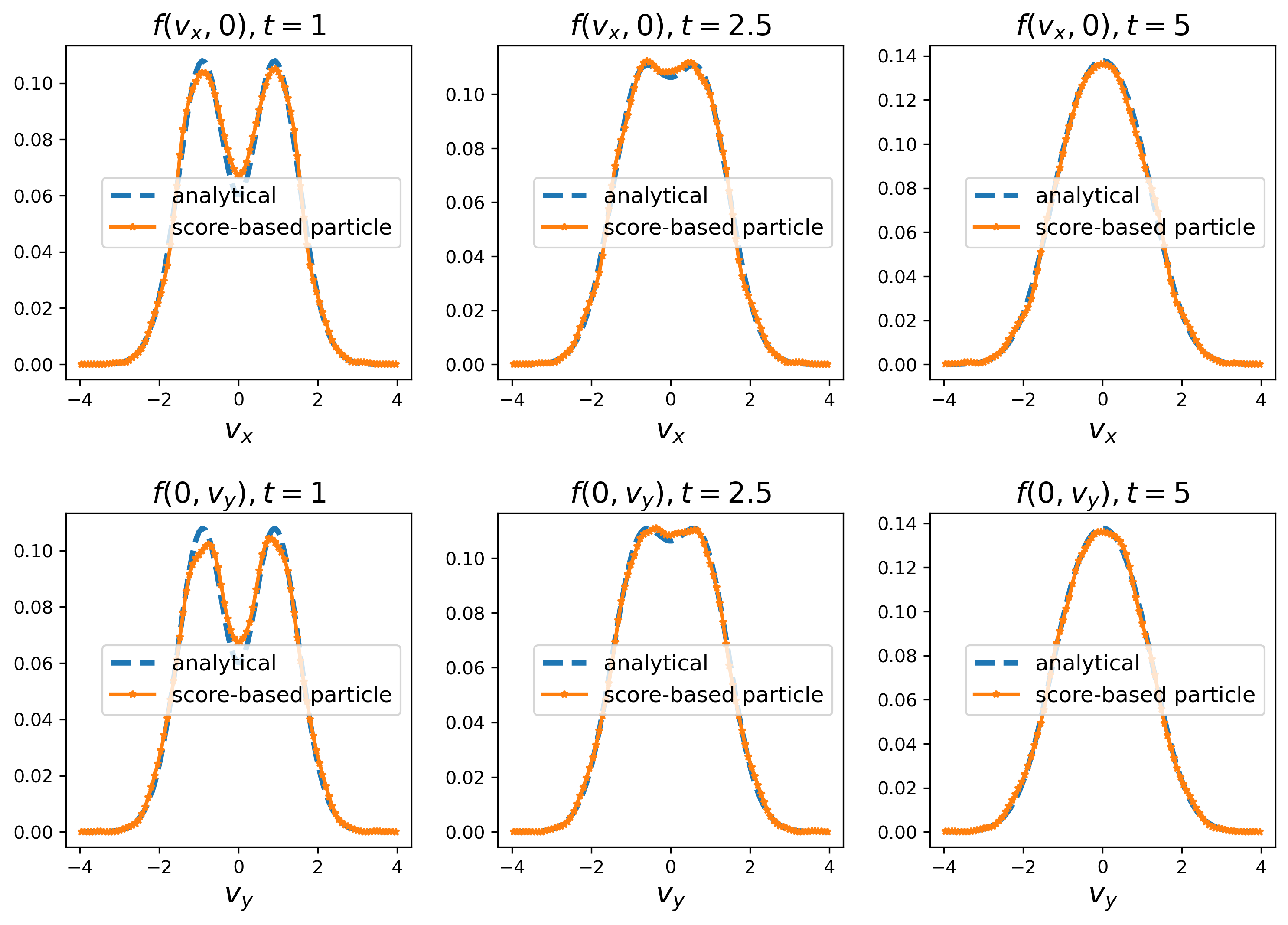}}
    \caption{Slices of the reconstructed and analytical solutions at $t=1$, $2.5$, and $5$.}
    \label{fig1_kde}
\end{figure}

\subsection{Example 2: 3D BKW solution for Maxwell molecules}
Consider the collision kernel
\begin{equation*}
    A(\bz)=\frac{1}{24}(|\bz|^2I_d - \bz \otimes \bz) \,,
\end{equation*}
and an exact solution is given by
\begin{equation*}
    \tf_t(\bv) = \frac{1}{(2\pi K)^{3/2}} \exp\left( -\frac{|\bv|^2}{2K} \right)\left( \frac{5K-3}{2K} + \frac{1-K}{2K^2}|\bv|^2 \right) \,, \quad K=1-\exp(-t/6) \,.
\end{equation*}
\textit{Setting}. \quad
In this test, we set $t_0=5.5$ and compute the solution until $t_{\operatorname{end}}=6$. The time step is $\Delta t=0.01$. The total number of particles is $N=40^3$, initially i.i.d. sampled from $\tf_{5.5}$ by rejection sampling. The architecture of score $\bs_t$ is a fully-connected neural network with $3$ hidden layers, $32$ neurons per hidden layer, and \texttt{swish} activation function. The initialization is identical to the first example. We train the neural networks using \texttt{Adamax} optimizer with a learning rate of $\eta=10^{-4}$, loss tolerance $\delta=10^{-4}$ for the initial score-matching, and the max iteration number $I_{\max}=25$ for the following implicit score-matching. 
\\
\\
\textit{Comparison}. \quad
The time evolution of the relative Fisher divergence, the kinetic energy, and the entropy decay rate are shown in Fig. \ref{fig_2_fisher}, \ref{fig_2_energy}, and \ref{fig_2_entropy}.

\begin{figure}[H]
    \centering
    \subfloat[Time evolution of the relative Fisher divergence \eqref{rF}.]{{\includegraphics[scale=0.45]{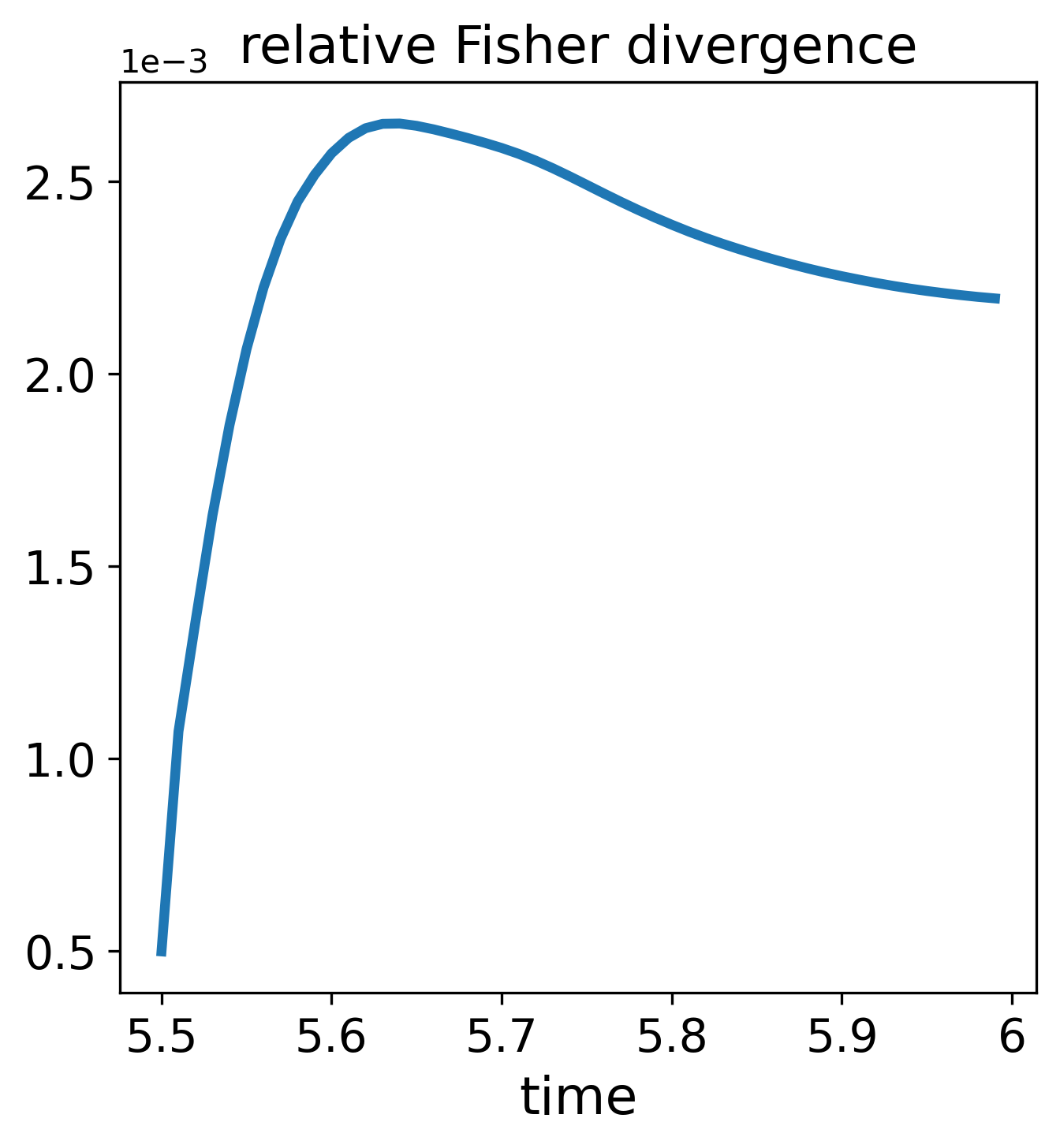}}\label{fig_2_fisher}}
    \hspace{4em}
    \subfloat[Time evolution of the kinetic energy.]{{\includegraphics[scale=0.45]{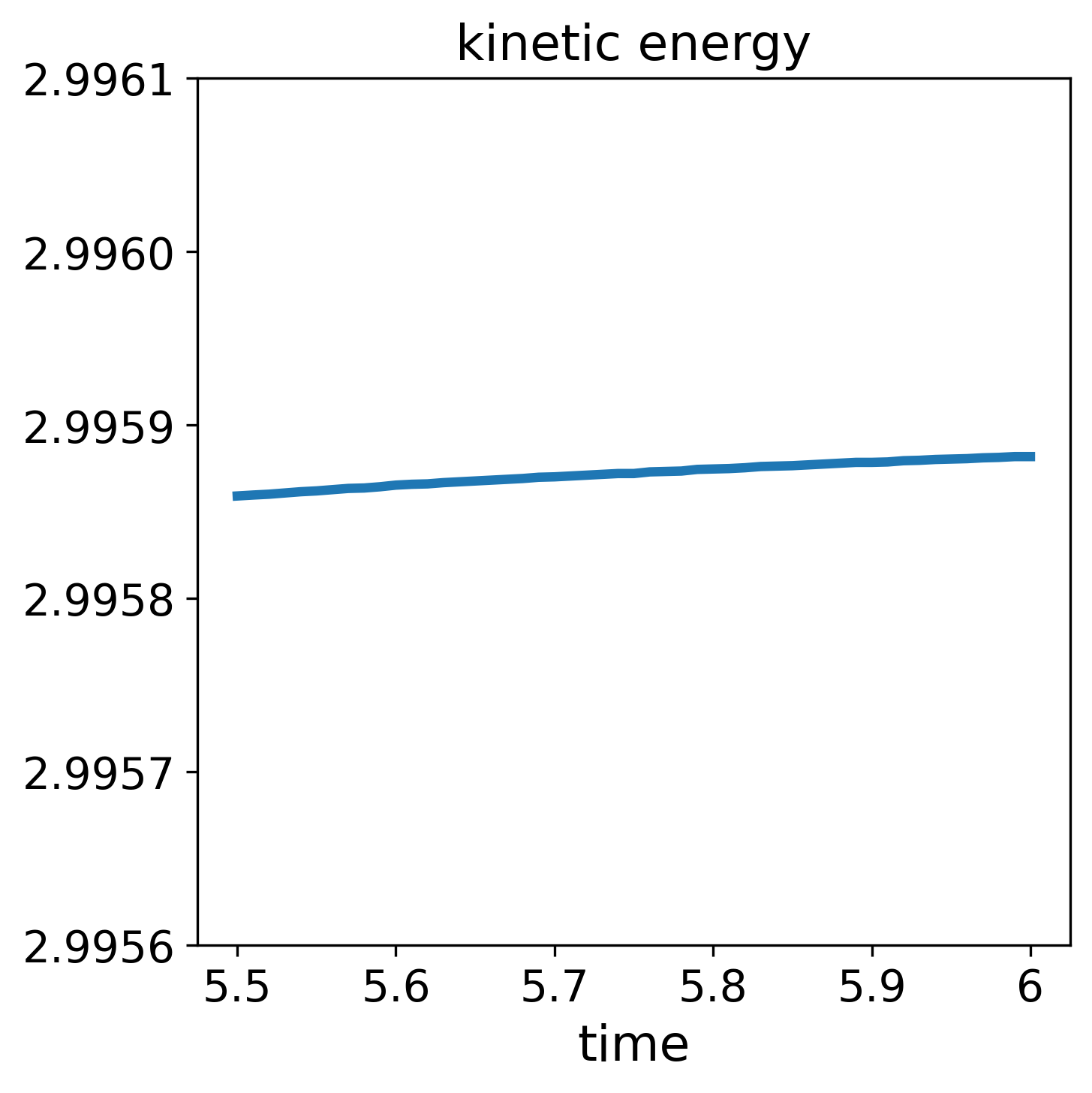}}\label{fig_2_energy}}
    
    \vspace{1em}
    
    \subfloat[The evolution of the entropy decay rate.]{{\includegraphics[scale=0.45]{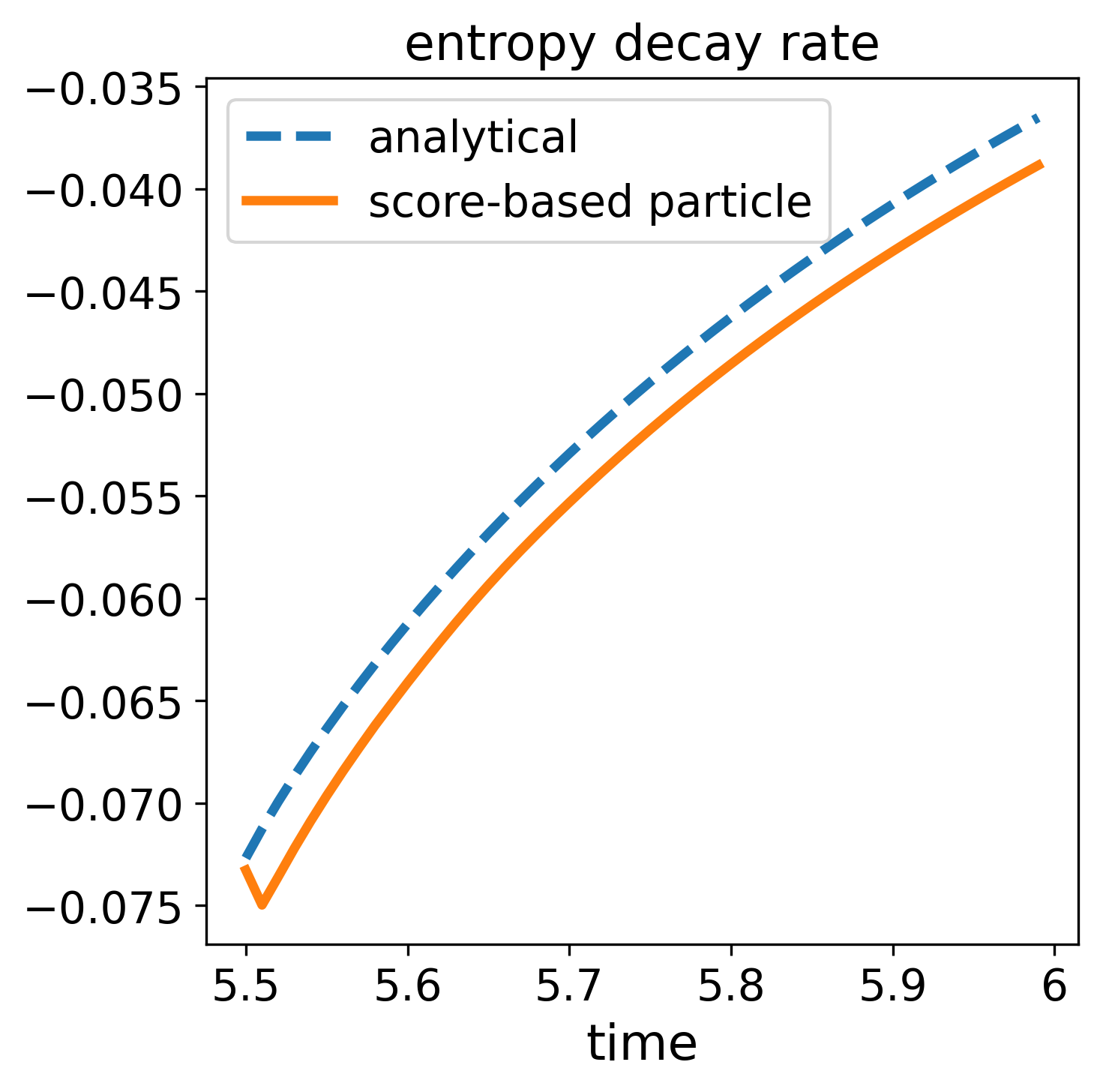}}\label{fig_2_entropy}}
    \hspace{4em}
    \subfloat[Time evolution of the relative $L^2$-error \eqref{L2} of reconstructed density.]{{\includegraphics[scale=0.45]{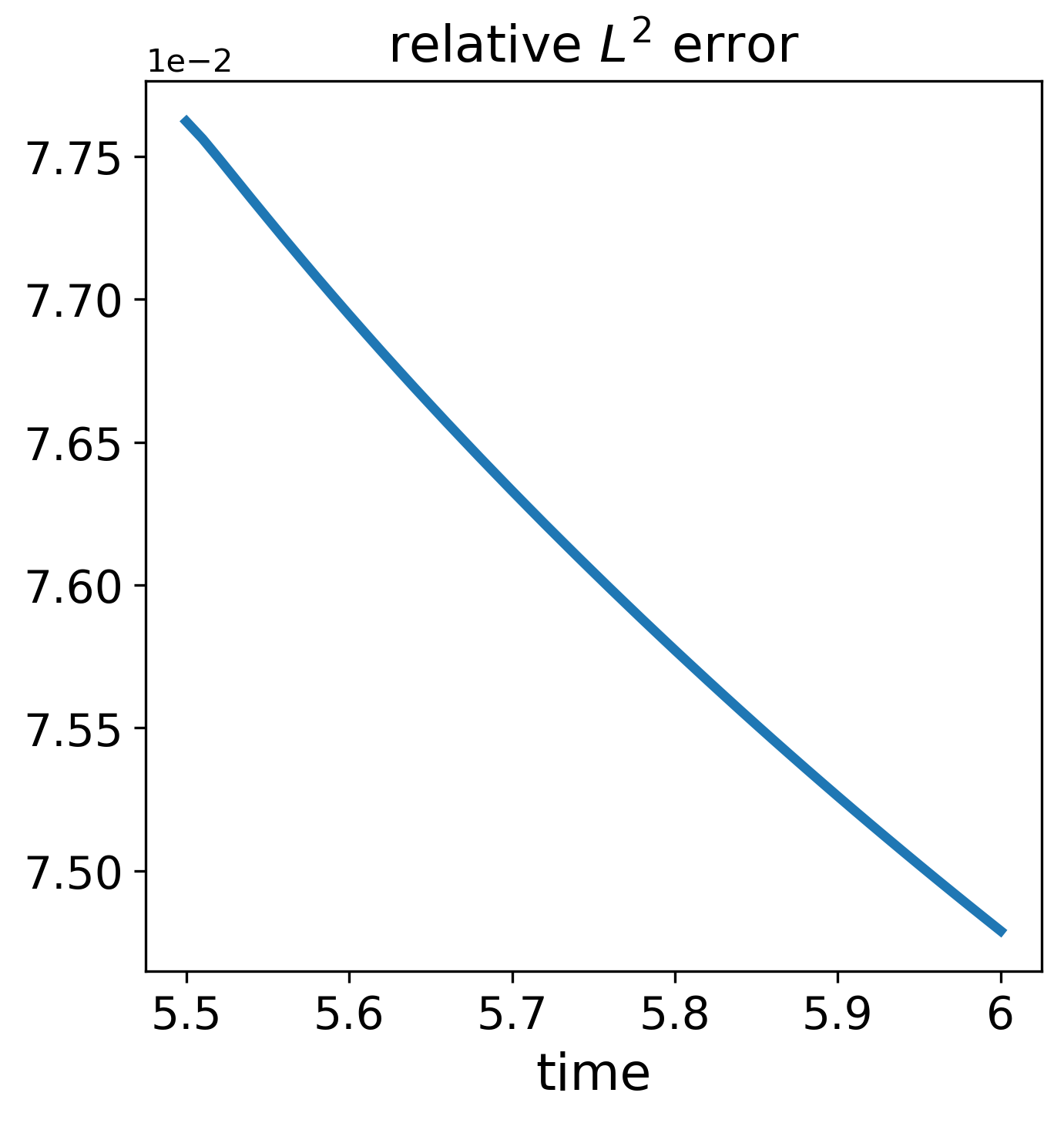}}\label{fig_2_L2}}
    \caption{Quantitative comparisons between the numerical solution and analytical solution.}
\end{figure}

We also reconstruct the solution via KDE on the computational domain $[-L, L]^3$ with $L = 4$, and uniformly divide the domain into $40^3$ meshes. The bandwidth of the Gaussian kernel is chosen to be $\varepsilon=0.15$. In Fig. \ref{fig_2_L2}, we track the discrete relative $L^2$-error (defined in \eqref{L2}) between the analytical solution and the reconstructed solution and plot a slice of the solution at $t=5.5, 5.75$ and $6$ in Fig. \ref{fig_2_kde}.
\begin{figure}
    \centering
    \includegraphics[scale=0.6]{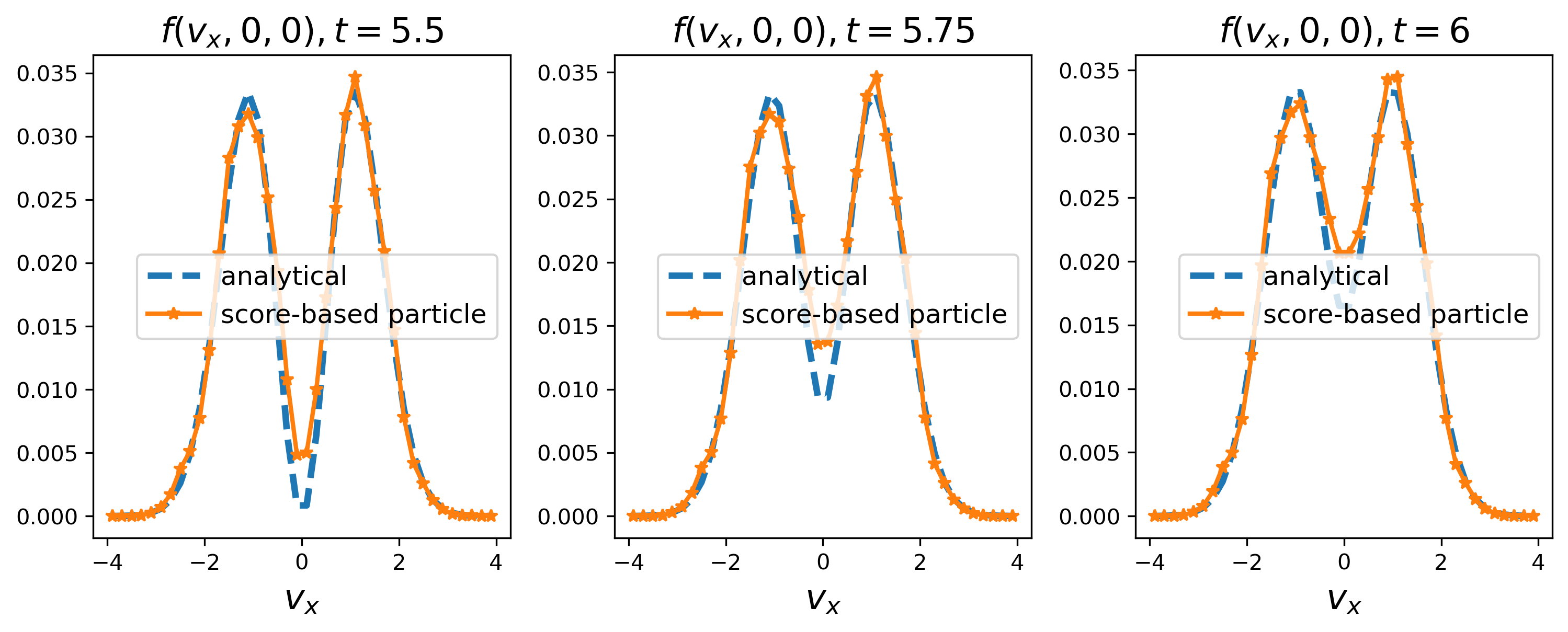}
    \caption{Slices of the reconstructed and analytical solution at $t=5.5$, $5.75$ and $6$.}
    \label{fig_2_kde}
\end{figure}

\subsection{Example 3: 2D anisotropic solution with Coulomb potential}
Consider the Coulomb collision kernel
\begin{equation*}
    A(\bz)=\frac{1}{16}\frac{1}{|\bz|^3}(|\bz|^2I_d - \bz \otimes \bz) \,,
\end{equation*}
and the initial condition is given by a bi-Maxwellian
\begin{equation*}
    f_0(\bv) = \frac{1}{4\pi} \left\{ \exp\left( -\frac{|\bv- \bu_1|^2}{2} \right) + \exp\left( -\frac{|\bv-\bu_2|^2}{2} \right)  \right\} \,, ~~ \bu_1=(-2,1) \,, ~~ \bu_2=(0,-1) \,.
\end{equation*}

\noindent \textit{Setting}. \quad
In this experiment, we start from $t_0=0$ and compute the solution until $t_{\operatorname{end}}=40$, with time step $\Delta t=0.1$. We choose the number of particles as $N=120^2$, sampled from $f_0$. 
We use a fully-connected neural network with $2$ hidden layers, $32$ neurons per hidden layer, and \texttt{swish} activation function to approximate the score $\bs_t$. The initialization is identical to the first example. We train the neural networks using \texttt{Adamax} optimizer with a learning rate of $\eta=10^{-4}$, loss tolerance $\delta=10^{-5}$ for the initial score-matching, and the max iteration number $I_{\max}=25$ for the following implicit score-matching.
\\
\\
\noindent \textit{Result}. \quad
We reconstruct the solution via KDE on the computational domain $[-L, L]^2$ with $L = 10$, and uniformly divide the domain into {$120^2$} meshes. The bandwidth of the Gaussian kernel is chosen to be $\varepsilon=0.3$. The result in Fig. \ref{fig3_kde} closely resembles the findings in the deterministic particle method and even the spectral method presented in \cite{carrillo2020particle}. 
\begin{figure}[H]
    \centerline{\includegraphics[scale=0.5]{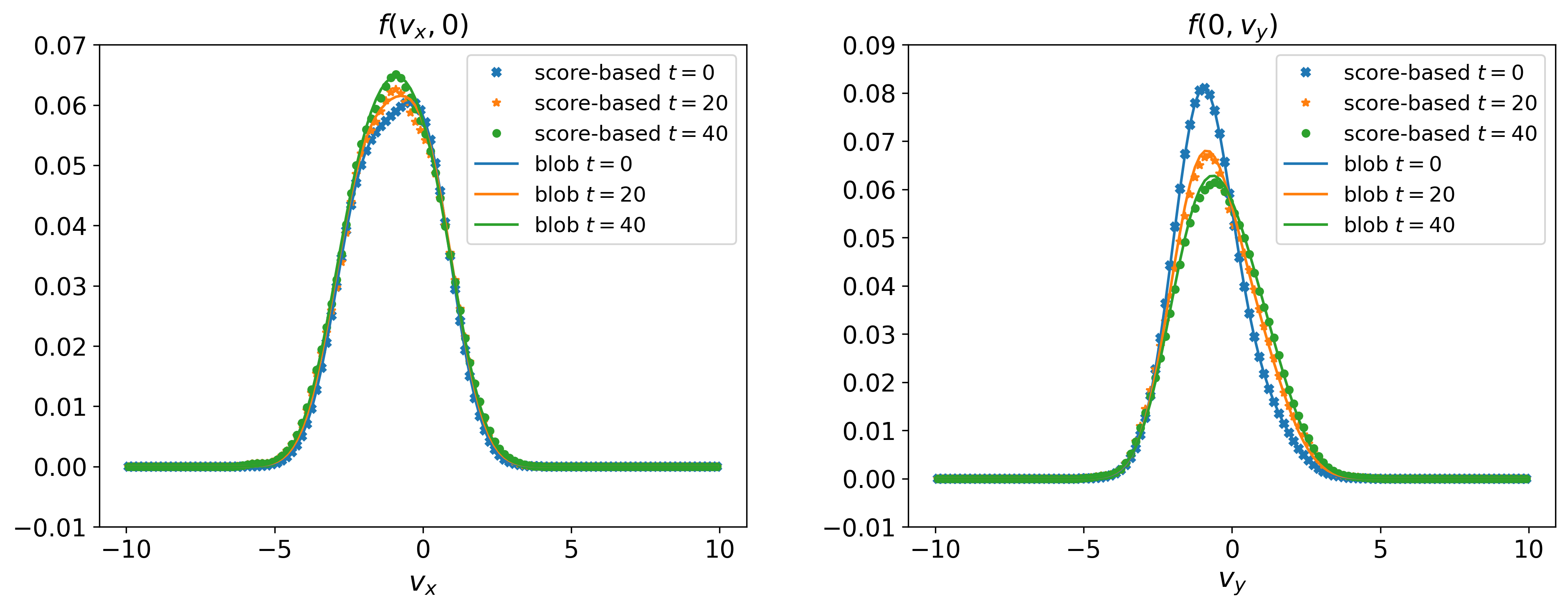}}
    \caption{Slices of the reconstructed and blob solutions at $t=0$, $20$, and $40$.}
    \label{fig3_kde}
\end{figure}

\subsection{Example 4: 3D Rosenbluth problem with Coulomb potential}\label{sec5.4}
Consider the collision kernel
\begin{equation*}
    A(\bz)=\frac{1}{4\pi}\frac{1}{|\bz|^3}(|\bz|^2I_d - \bz \otimes \bz) \,,
\end{equation*}
and the initial condition
\begin{equation*}
    f_0(\bv) = \frac{1}{S^2} \exp\left( -S\frac{(|\bv|-\sigma)^2}{\sigma^2} \right) \,, \quad \sigma=0.3 \,, \quad S=10 \,.
\end{equation*}

\noindent \textit{Setting}. \quad
In the example, we start from $t_0=0$ and compute the solution until $t_{\operatorname{end}}=20$, with time step $\Delta t=0.2$.  $N=30^3$ are initially sampled from $f_0$ by rejection sampling. The neural network approximating the score $\bs_t$ is set to be a residue neural network \cite{resnet} with $3$ hidden layers, $32$ neurons per hidden layer, and \texttt{swish} activation function. The initialization is identical to the first example. We train the neural networks using {\texttt{Adam}} optimizer with a learning rate of $\eta=10^{-4}$, loss tolerance $\delta=5\times 10^{-4}$ for the initial score-matching, and the max iteration number $I_{\max}=25$ for the following implicit score-matching.
\\
\\
\textit{Result}. \quad
We reconstruct the solution via KDE on the computational domain $[-L, L]^3$ with $L = 1$, and uniformly divide the domain into $64^3$ meshes. The bandwidth of the Gaussian kernel is chosen to be $\varepsilon=0.035$ for $t=10$ and $\varepsilon=0.045$ for $t=20$. In Fig. \ref{fig_eg4_kde}, we again observe a favorable agreement with the results presented in \cite{carrillo2020particle}.
\begin{figure}
    \centerline{\includegraphics[scale=0.55]{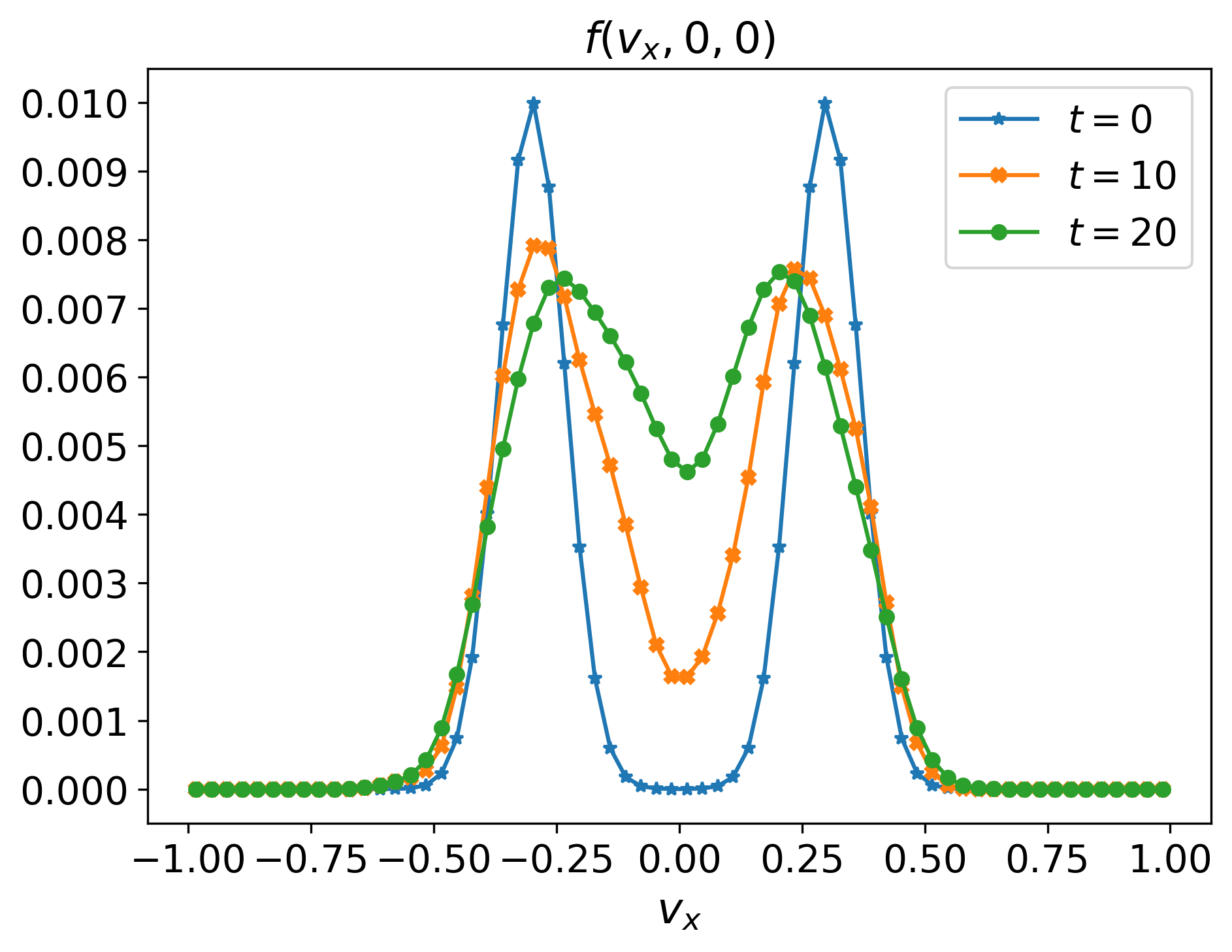}}
    \caption{Slices of the reconstructed solutions at $t=0$, $10$, and $20$.}
    \label{fig_eg4_kde}
\end{figure}
\\
\\
\noindent \textit{Efficiency}. \quad
To demonstrate the efficiency improvement in our approach, we compare the computation time for obtaining the score on GPU. To ensure a fair comparison, all the codes are written in PyTorch and executed on the Minnesota Supercomputer Institute with one Nvidia A40 GPU. 
As shown in Fig \ref{fig_eg4_time},  we observe a $\mathcal{O}(N)$ scaling of computational time in the score-based particle method. In contrast, the blob method \cite{carrillo2020particle} exhibits a computational time scaling of approximately $\mathcal{O}(N^{2})$.

Nevertheless, we would like to point out that even though the score-based particle method speeds up score evaluation, the summation in $N$ on the right-hand side of \eqref{vup} can be computationally expensive due to direct summations. To mitigate this, one could implement a treecode solver as demonstrated in \cite{carrillo2020particle}, reducing the cost to $\mathcal{O}(N\log N)$, or adopt the random batch particle method proposed in \cite{randombatch}, which reduces the cost to $\mathcal{O}(N^2/R)$, where $R$ is the number of batches. Since this paper primarily focuses on promoting the score-based concept, we leave further speed enhancements for future investigation.

\begin{figure}[H]
    \centerline{\includegraphics[scale=0.55]{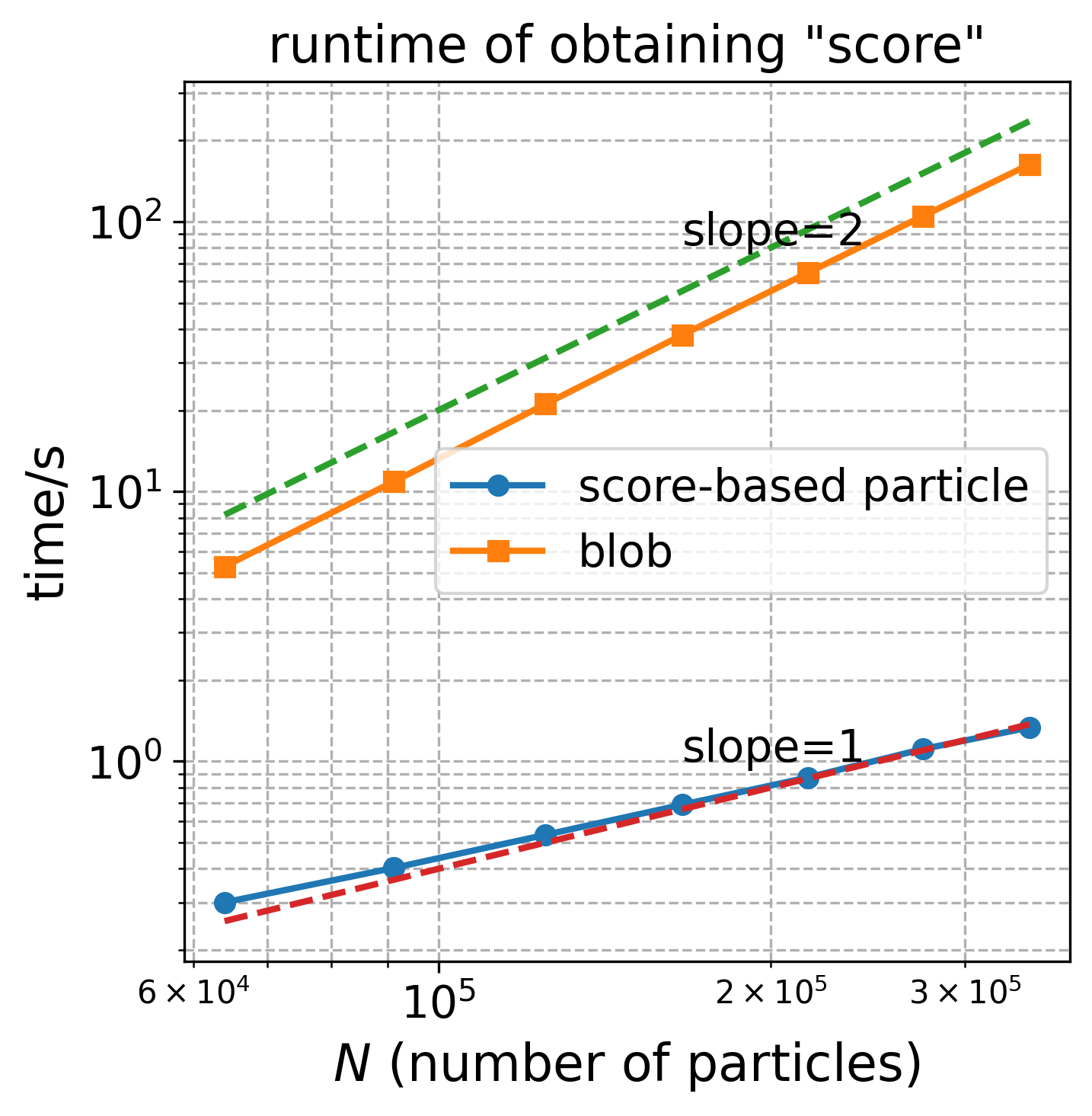}}
    \caption{GPU time (in seconds) for computing the score using the score-based particle method and the blob method with respect to particle number $N$.}
    \label{fig_eg4_time}
\end{figure}

\subsection{Density computation via Algorithm \ref{algorithm_densitiy}}
This subsection is dedicated to investigating the density computation outlined in Section \ref{sec:4}. We first examine the effectiveness of the formula (lines 5--6 in Algorithm~\ref{algorithm_densitiy}) when the score function is provided exactly. To do so, we revisit the example in Section \ref{sec5.1}. If the exact score is available, then the only expected errors are the Monte Carlo error which scales as $\mathcal{O}(N^{-\frac12})$ from initial sampling and the time discretization error $\mathcal{O}(\Delta t)$. In the following tests, we validate this order of accuracy by examining the numerical entropy at time $t^n$ defined by
\begin{equation*}
    \mathcal{H}_{\Delta t}^N(t_n) := \frac{1}{N} \sum_{i=1}^N \log f_{t_n}(\bv_i^n) \,.
\end{equation*}

To check the convergence in particle number $N$ and time step size $\Delta t$, we compute the following average $L^2$ error in $N$ and $L^1$ error in $\Delta t$:
\begin{equation*}
    e_N = \sqrt{\frac{1}{J} \sum_{j=1}^J \left| \mathcal{H}_{\Delta t}^N(t_{\operatorname{end}}) - \mathcal{H}_{\operatorname{ext}}(t_{\operatorname{end}}) \right|^2 } \,, 
    \quad \text{and} \quad e_{\Delta t} = \left| \mathcal{H}_{\Delta t}^N(t_{\operatorname{end}}) - \mathcal{H}_{\frac{\Delta t}{2}}^N(t_{\operatorname{end}}) \right| \,.
\end{equation*}
We compute $e_N$ over $J=20$ runs for each value of $N$, using different random seeds in each run to ensure independence. Fig. \ref{fig_5_N} shows the expected Monte Carlo convergence rate with respect to the particle number. We also observe the first-order accuracy in time in Fig. \ref{fig_5_t}.

\begin{figure}[H]
    \centering
    \subfloat[Monte Carlo rate with respect to particle number. We fix $t_{\operatorname{end}}=0.1$ and $\Delta t =0.01$. The particle number $N$ varies from $10^2$ to $10^4$.]
    {{\includegraphics[scale=0.45]{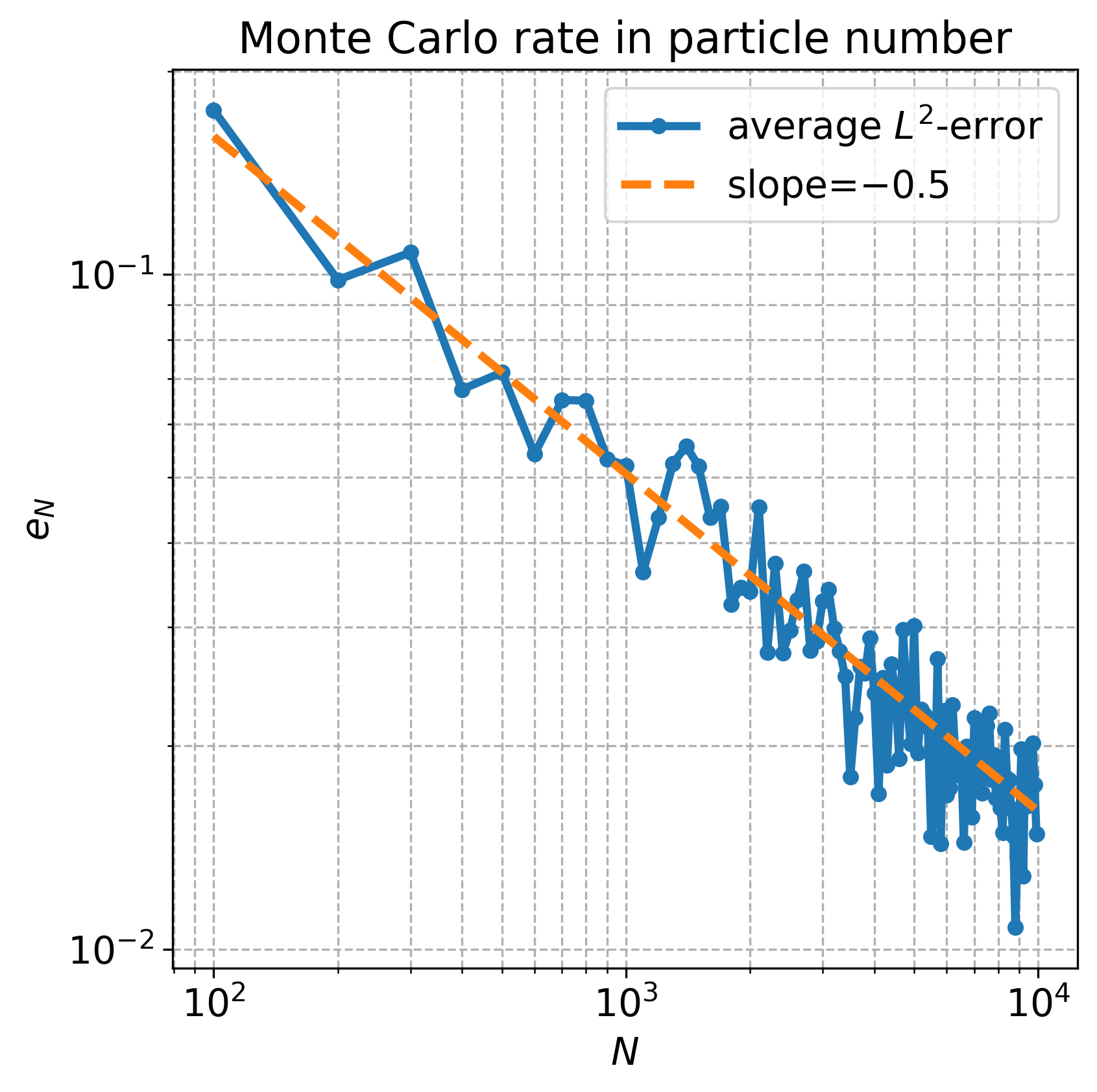}}\label{fig_5_N}}
    \hspace{4em}
    \subfloat[First order accuracy in time. We fix $t_{\operatorname{end}}=0.16$ and particle number $N=10^4$. The time step size $\Delta t=0.0025\,, 0.005\,, 0.01\,, 0.02\,, 0.04$.]{{\includegraphics[scale=0.45]{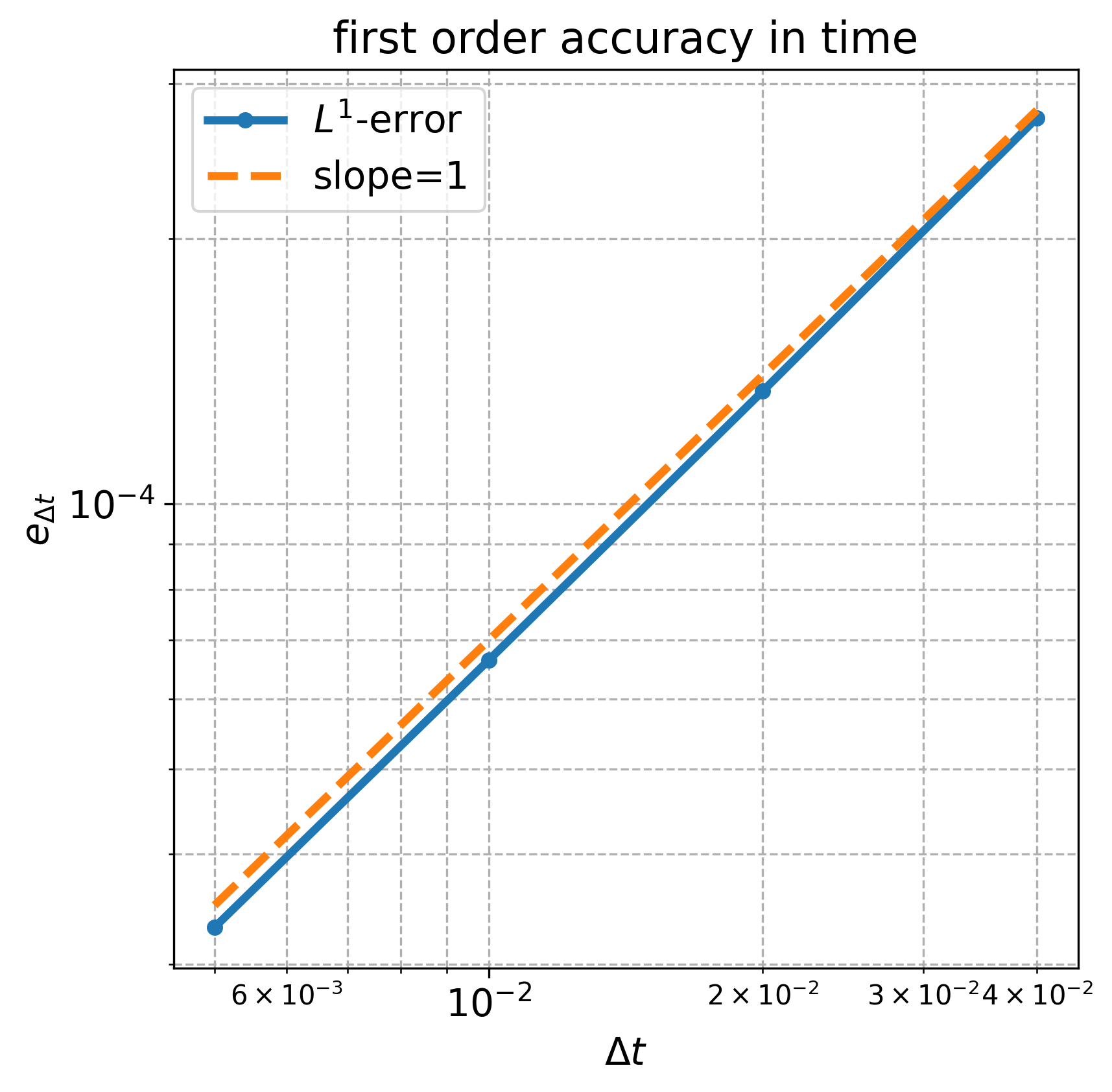}}\label{fig_5_t}}
    \caption{Convergence rate of Algorithm \ref{algorithm_densitiy} in $N$ and $t$.}
\end{figure}

Finally, we visually compare the numerical solution obtained by Algorithm \ref{algorithm_densitiy} using the score-matching trick with the analytical solution. The experimental setup is identical to that in Section \ref{sec5.1}. In Fig. \ref{fig_5_f_nn}, despite oscillations appearing near the top, the numerical solution still matches the analytical solution well. We attribute this to the fact that density computation uses the gradient of the score, while evolving particles only require the score itself, leading to higher accuracy demands on the score function for density computation.

To improve the accuracy of the learned score as well as its gradient, we propose the following parameterization of the score:
\begin{equation}\label{new_score}
    \bs(\bv) = h_{\theta}(|\bv|) \bv \,,
\end{equation}
where $h_{\theta}: \R \to \R$ is a neural network with the same architecture as in Section \ref{sec5.1}. This choice is motivated by the observation that any radial symmetric solution of the Landau equation---such as the BKW solution---results in the corresponding score taking the form of a product of a radially symmetric function with $\bv$. Fig. \ref{fig_5_f_new_nn} demonstrates that the new parametrization \eqref{new_score} reduces oscillations, resulting in a numerical solution that closely overlaps with the analytical solution.

\begin{figure}
    \centering
    \subfloat[Approximating score using a neural network directly.]{{\includegraphics[scale=0.45]{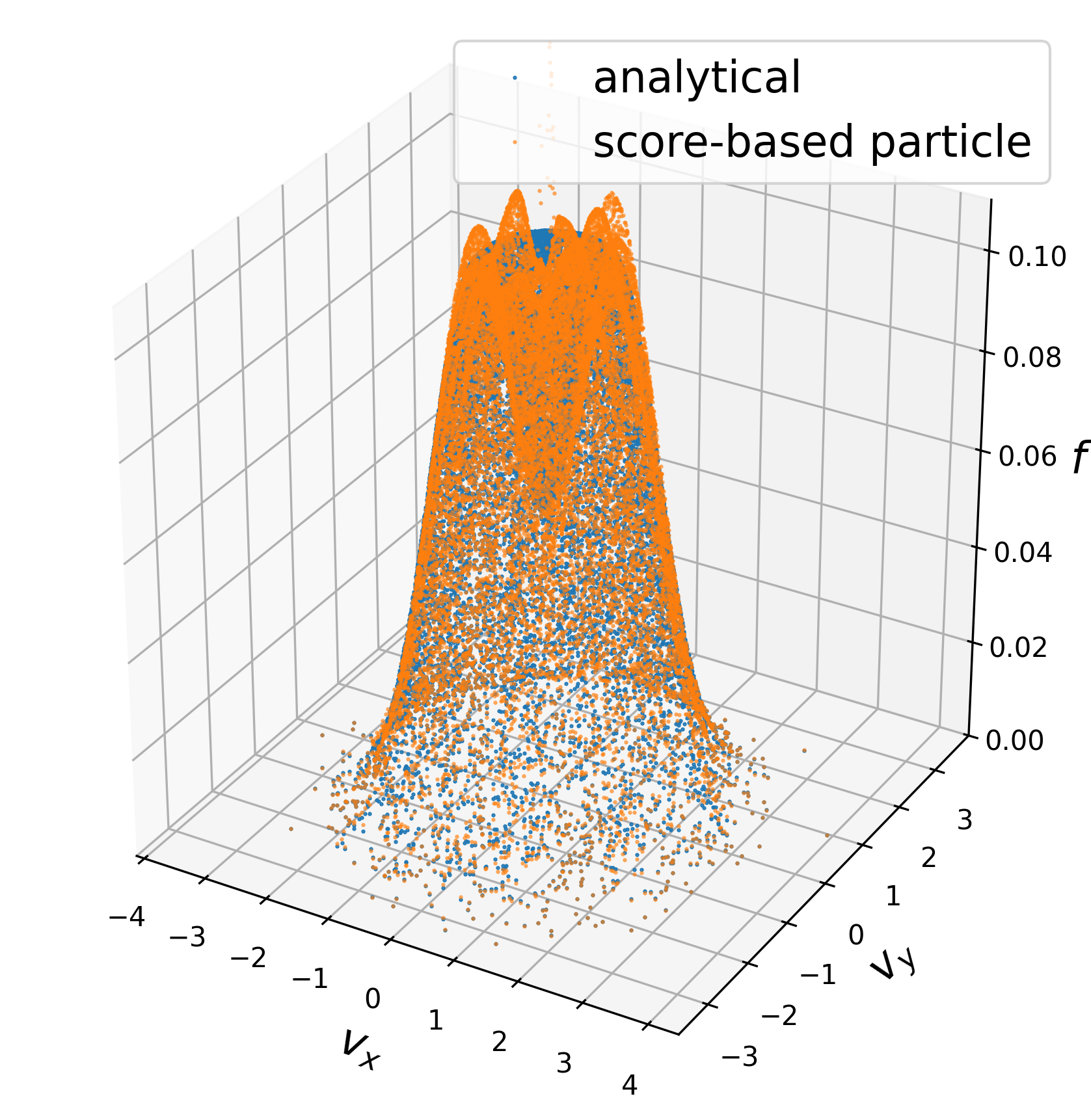}}\label{fig_5_f_nn}}
    \hspace{4em}
    \subfloat[Approximating score by new architecture \eqref{new_score}.]{{\includegraphics[scale=0.45]{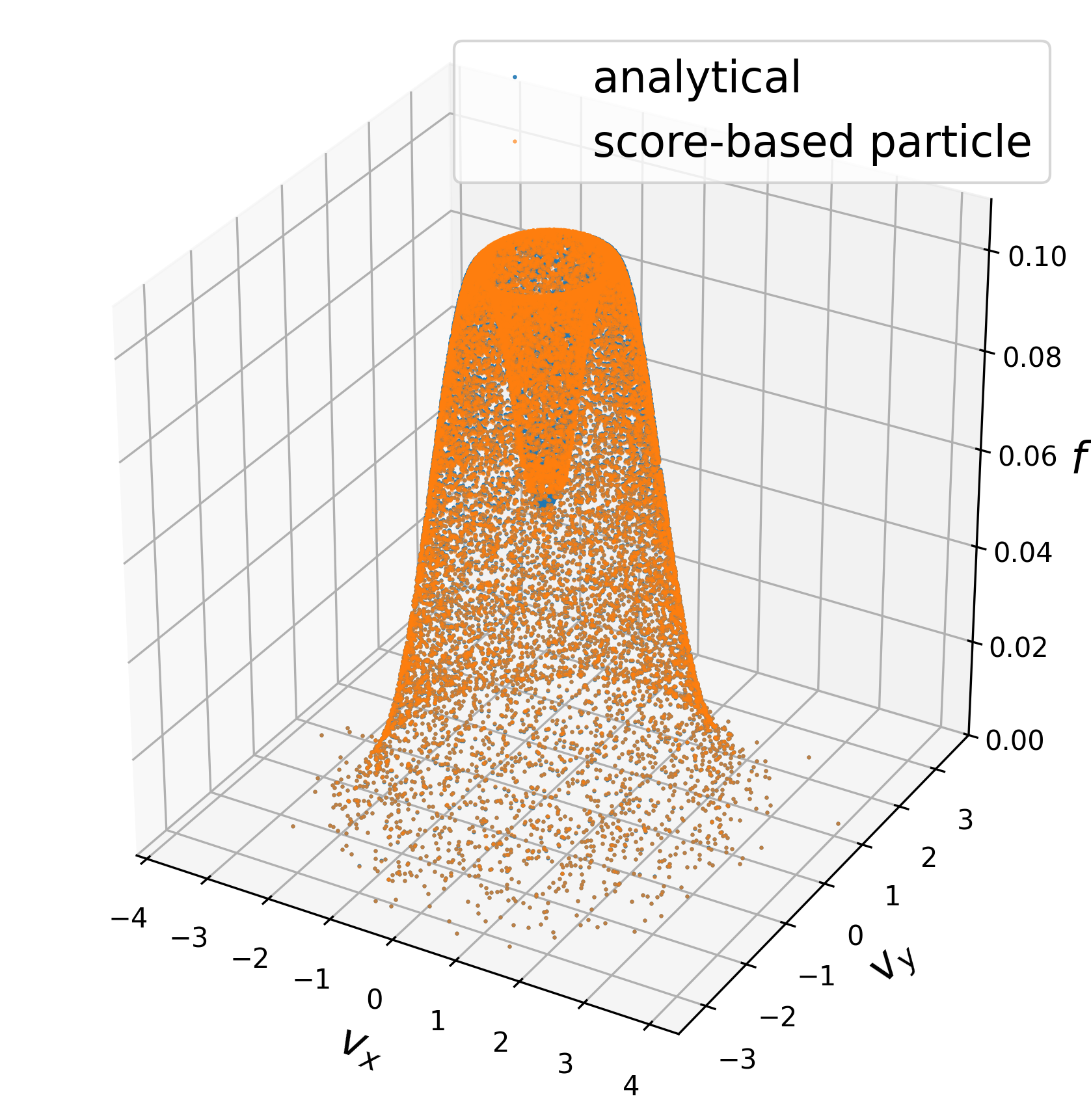}}\label{fig_5_f_new_nn}}
    \caption{Comparison of the analytical and numerical solutions obtained by Algorithm \ref{algorithm_densitiy} at $t=1$ using different score parametrization strategies.}
\end{figure}

\section{Conclusion} \label{sec:6}
In this paper, we introduce a structure-preserving score-based particle method for solving the space homogeneous Landau equation in plasmas. Our approach is rooted in interpreting the Landau equation as a continuity equation, enabling a deterministic particle method, which has been previously adopted in \cite{carrillo2020particle}. A key challenge in this approach is the nonlinear dependence of the velocity field on the density, which necessitates kernel density estimation. Our main contribution lies in recognizing that this nonlinearity takes the form of the score function, which can be efficiently learned from data using score-matching techniques. Additionally, we develop an algorithm for exact density computation from the flow map perspective, allowing direct calculation of quantities such as entropy. Finally, we provide a theoretical analysis demonstrating that the KL divergence between our approximation and the true solution can be effectively controlled using the score-matching loss. To make this method truly applicable to plasma simulations, it is natural to extend it to the spatially inhomogeneous case by combining it with the particle-in-cell method \cite{bailo2024collisional, chen2011energy}, and we plan to explore this extension in future work. Acceleration based on the random batch method \cite{randombatch} will also be investigated.

\appendix

\section{Proof of Proposition \ref{prop:pd}}\label{proof:pd}
\begin{proof}
    Since the integration domain is a torus, it suffices to show that $\int_{\bbT^3} A(\bv) \rho(\bv) \rd\bv \succ 0$. 
    \begin{equation*}
        \int_{\bbT^3} A(\bv) \rho(\bv) \rd\bv = \int_{\bbT^3} |\bv|^{\gamma}
        \begin{bmatrix}
            v_2^2+v_3^2 & -v_1 v_2 & -v_1 v_3 \\
            -v_1 v_2 & v_1^2+v_3^2 & -v_2 v_3 \\
            -v_1 v_3 & -v_2 v_3 & v_1^2+v_2^2 
        \end{bmatrix}
        \rho \rd\bv := B \,.
    \end{equation*}
    To show that the symmetric matrix $B$ is positive-definite, we only need to show that all the leading principal minors of $B$ are positive. In fact, the first-order leading principal minor is obviously positive. The second-order leading principal minor is given by
    \begin{flalign*}
            & \int_{\bbT^3} |\bv|^\gamma (v_2^2+v_3^2) \rho\rd\bv  
            \int_{\bbT^3} |\bv|^\gamma (v_1^2+v_3^2) \rho\rd\bv - 
            \left(\int_{\bbT^3} |\bv|^\gamma v_1 v_2 \rho\rd\bv \right)^2 \\
            = & \int_{\bbT^3} |\bv|^\gamma (|\bv|^2 -v_1^2) \rho\rd\bv  
            \int_{\bbT^3} |\bv|^\gamma (|\bv|^2 -v_2^2) \rho\rd\bv - 
            \left(\int_{\bbT^3} |\bv|^\gamma v_1 v_2 \rho\rd\bv \right)^2 \\
            = & \left(\int_{\bbT^3} |\bv|^{\gamma+2} \rho\rd\bv - \int_{\bbT^3} |\bv|^\gamma v_1^2 \rho\rd\bv \right)  
            \left(\int_{\bbT^3} |\bv|^{\gamma+2} \rho\rd\bv - \int_{\bbT^3} |\bv|^\gamma v_2^2 \rho \rd\bv \right) - 
            \left(\int_{\bbT^3} |\bv|^\gamma v_1 v_2 \rho\rd\bv \right)^2 \\
            = & \left(\int_{\bbT^3} |\bv|^{\gamma+2} \rho\rd\bv \right)^2 - \int_{\bbT^3} |\bv|^\gamma (v_1^2+v_2^2) \rho \rd\bv  \int_{\bbT^3} |\bv|^{\gamma+2} \rho\rd\bv 
            + \int_{\bbT^3} |\bv|^\gamma v_1^2 \rho\rd\bv \int_{\bbT^3} |\bv|^\gamma v_2^2 \rho\rd\bv - \left(\int_{\bbT^3} |\bv|^\gamma v_1 v_2 \rho\rd\bv \right)^2 \\
            \geq & \left(\int_{\bbT^3} |\bv|^{\gamma+2} \rho\rd\bv \right)^2 - \int_{\bbT^3} |\bv|^\gamma (v_1^2+v_2^2) \rho\rd\bv  \int_{\bbT^3} |\bv|^{\gamma+2} \rho\rd\bv \\
            = & \int_{\bbT^3} |\bv|^{\gamma+2} \rho\rd\bv  \int_{\bbT^3} |\bv|^\gamma v_3^2 \rho\rd\bv > 0 \,.
    \end{flalign*}
    The third-order leading principal minor (determinant) is given by $d_1 + d_2 + d_3$, where 
    \begin{flalign*}
        d_1 &= \int_{\bbT^3} |\bv|^\gamma (v_2^2+v_3^2) \rho\rd\bv 
        \left[ \int_{\bbT^3} |\bv|^\gamma (v_1^2+v_3^2) \rho\rd\bv \int_{\bbT^3} |\bv|^\gamma (v_1^2+v_2^2) \rho\rd\bv - \left(\int_{\bbT^3} |\bv|^\gamma v_2v_3 \rho\rd\bv \right)^2 \right] \\
        &= \int_{\bbT^3} |\bv|^\gamma (v_2^2+v_3^2) \rho\rd\bv 
        \bigg[ \left(\int_{\bbT^3} |\bv|^\gamma v_1^2 \rho\rd\bv\right)^2 
        + \int_{\bbT^3} |\bv|^\gamma v_1^2 \rho\rd\bv \int_{\bbT^3} |\bv|^\gamma v_2^2 \rho\rd\bv 
        + \int_{\bbT^3} |\bv|^\gamma v_3^2 \rho\rd\bv \int_{\bbT^3} |\bv|^\gamma v_1^2 \rho\rd\bv + \\
        & \int_{\bbT^3} |\bv|^\gamma v_3^2 \rho\rd\bv \int_{\bbT^3} |\bv|^\gamma v_2^2 \rho\rd\bv - \left(\int_{\bbT^3} |\bv|^\gamma v_2v_3 \rho\rd\bv \right)^2 \bigg] \\
        & \geq \int_{\bbT^3} |\bv|^\gamma (v_2^2+v_3^2) \rho\rd\bv 
        \left[ \left(\int_{\bbT^3} |\bv|^\gamma v_1^2 \rho\rd\bv\right)^2 
        + \int_{\bbT^3} |\bv|^\gamma v_1^2 \rho\rd\bv \int_{\bbT^3} |\bv|^\gamma v_2^2 \rho\rd\bv 
        + \int_{\bbT^3} |\bv|^\gamma v_3^2 \rho\rd\bv \int_{\bbT^3} |\bv|^\gamma v_1^2 \rho\rd\bv \right]
    \end{flalign*}
    \begin{flalign*}
        d_2 &= \int_{\bbT^3} |\bv|^\gamma v_1v_2 \rho\rd\bv
        \left[ -\int_{\bbT^3} |\bv|^\gamma v_1v_2 \rho\rd\bv \int_{\bbT^3} |\bv|^\gamma (v_1^2+v_2^2) \rho\rd\bv - \int_{\bbT^3} |\bv|^\gamma v_2v_3 \rho\rd\bv \int_{\bbT^3} |\bv|^\gamma v_1v_3 \rho\rd\bv \right] \\
        &= -\left( \int_{\bbT^3} |\bv|^\gamma v_1v_2 \rho\rd\bv \right)^2 \int_{\bbT^3} |\bv|^\gamma (v_1^2+v_2^2) \rho\rd\bv 
        - \int_{\bbT^3} |\bv|^\gamma v_1v_2 \rho\rd\bv \int_{\bbT^3} |\bv|^\gamma v_2v_3 \rho\rd\bv \int_{\bbT^3} |\bv|^\gamma v_1v_3 \rho\rd\bv
    \end{flalign*}
    \begin{flalign*}
        d_3 &= -\int_{\bbT^3} |\bv|^\gamma v_1v_3 \rho\rd\bv
        \left[ \int_{\bbT^3} |\bv|^\gamma v_1v_2 \rho\rd\bv \int_{\bbT^3} |\bv|^\gamma v_2v_3 \rho\rd\bv + \int_{\bbT^3} |\bv|^\gamma (v_1^2+v_3^2) \rho\rd\bv \int_{\bbT^3} |\bv|^\gamma v_1v_3 \rho\rd\bv \right] \\
        &= - \left( \int_{\bbT^3} |\bv|^\gamma v_1v_3 \rho\rd\bv \right)^2 \int_{\bbT^3} |\bv|^\gamma (v_1^2+v_3^2) \rho\rd\bv 
        - \int_{\bbT^3} |\bv|^\gamma v_1v_2 \rho\rd\bv \int_{\bbT^3} |\bv|^\gamma v_2v_3 \rho\rd\bv \int_{\bbT^3} |\bv|^\gamma v_1v_3 \rho\rd\bv
    \end{flalign*}
    Note that 
    \begin{equation*}
        \int_{\bbT^3} |\bv|^\gamma v_1^2 \rho\rd\bv \int_{\bbT^3} |\bv|^\gamma v_2^2 \rho\rd\bv \int_{\bbT^3} |\bv|^\gamma v_3^2 \rho\rd\bv
        - \int_{\bbT^3} |\bv|^\gamma v_1v_2 \rho\rd\bv \int_{\bbT^3} |\bv|^\gamma v_2v_3 \rho\rd\bv \int_{\bbT^3} |\bv|^\gamma v_1v_3 \rho\rd\bv > 0
    \end{equation*}
    \begin{equation*}
        \begin{split}
            & \int_{\bbT^3} |\bv|^\gamma v_2^2 \rho\rd\bv 
            \left[ \left(\int_{\bbT^3} |\bv|^\gamma v_1^2 \rho\rd\bv\right)^2 + \int_{\bbT^3} |\bv|^\gamma v_1^2 \rho\rd\bv \int_{\bbT^3} |\bv|^\gamma v_2^2 \rho\rd\bv \right]
            -\left( \int_{\bbT^3} |\bv|^\gamma v_1v_2 \rho\rd\bv \right)^2 \int_{\bbT^3} |\bv|^\gamma (v_1^2+v_2^2) \rho\rd\bv \\
            = & \left[\int_{\bbT^3} |\bv|^\gamma v_1^2 \rho\rd\bv \int_{\bbT^3} |\bv|^\gamma v_2^2 \rho\rd\bv - \left( \int_{\bbT^3} |\bv|^\gamma v_1v_2 \rho\rd\bv \right)^2\right]
            \int_{\bbT^3} |\bv|^\gamma (v_1^2+v_2^2) \rho\rd\bv
            > 0
        \end{split}
    \end{equation*}
    \begin{equation*}
        \begin{split}
            & \int_{\bbT^3} |\bv|^\gamma v_3^2 \rho\rd\bv 
            \left[ \left(\int_{\bbT^3} |\bv|^\gamma v_1^2 \rho\rd\bv\right)^2 + \int_{\bbT^3} |\bv|^\gamma v_3^2 \rho\rd\bv \int_{\bbT^3} |\bv|^\gamma v_1^2 \rho\rd\bv \right]
            -\left( \int_{\bbT^3} |\bv|^\gamma v_1v_3 \rho\rd\bv \right)^2 \int_{\bbT^3} |\bv|^\gamma (v_1^2+v_3^2) \rho\rd\bv \\
            = & \left[\int_{\bbT^3} |\bv|^\gamma v_1^2 \rho\rd\bv \int_{\bbT^3} |\bv|^\gamma v_3^2 \rho\rd\bv - \left( \int_{\bbT^3} |\bv|^\gamma v_1v_3 \rho\rd\bv \right)^2\right]
            \int_{\bbT^3} |\bv|^\gamma (v_1^2+v_3^2) \rho\rd\bv
            > 0
        \end{split}
    \end{equation*}
    Therefore, $d_1+d_2+d_3 > 0$.
\end{proof}

\section{Proof of Corollary~\ref{logdet_coro}}\label{app_proof}

\begin{lemma}\label{div_matrix}
If $A(\bu) \in \mathbb{R}^{d \times d}$, $\boldsymbol{x}(\bu) \in \mathbb{R}^d$, and $\bu \in \mathbb{R}^{d}$, then
\begin{equation*}
    \nabla_{\bu} \cdot (A(\bu)\boldsymbol{x}(\bu)) = (\nabla_{\bu} \cdot A(\bu)^\top) \cdot \boldsymbol{x}(\bu) + A(\bu) : \nabla_{\bu} \boldsymbol{x}(\bu)^\top,
\end{equation*}
where $A:B := \sum_{ij}A_{ij}B_{ij}$ and the divergence of matrix is applied row-wise. 
\end{lemma}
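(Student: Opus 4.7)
The plan is to prove Lemma~\ref{div_matrix} by a direct componentwise calculation, treating the divergence $\nabla_{\bu} \cdot (A(\bu)\bx(\bu))$ as an ordinary scalar function of $\bu$ and applying the product rule. Let me denote the $i$-th component of $A(\bu)\bx(\bu)$ as $(A\bx)_i = \sum_{j} A_{ij}(\bu) x_j(\bu)$, so that $\nabla_{\bu}\cdot(A\bx) = \sum_i \partial_{u_i}\bigl(\sum_j A_{ij} x_j\bigr)$. The product rule splits this immediately into two pieces: one in which derivatives land on the entries of $A$, and one in which they land on the components of $\bx$.

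The first step is to handle the $A$-derivative piece $\sum_{i,j} (\partial_{u_i} A_{ij})\, x_j$. Here I need to recognize this as $(\nabla_{\bu}\cdot A^\top)\cdot \bx$ under the row-wise divergence convention stated in the paper. Since row-wise divergence means $(\nabla_{\bu}\cdot M)_k = \sum_\ell \partial_{u_\ell} M_{k\ell}$, applied to $A^\top$ it yields $(\nabla_{\bu}\cdot A^\top)_j = \sum_i \partial_{u_i}(A^\top)_{ji} = \sum_i \partial_{u_i} A_{ij}$, whose dot product with $\bx$ reproduces the sum above. The only subtlety is keeping the transpose straight; otherwise this step is pure bookkeeping.

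The second step is to rewrite $\sum_{i,j} A_{ij}\, \partial_{u_i} x_j$ as $A : (\nabla_{\bu}\bx)^\top$. Using the convention that $(\nabla_{\bu}\bx)_{ji} = \partial_{u_i} x_j$, we have $((\nabla_{\bu}\bx)^\top)_{ij} = \partial_{u_i} x_j$, so the double contraction $A:(\nabla_{\bu}\bx)^\top = \sum_{i,j} A_{ij}\, \partial_{u_i} x_j$ matches exactly. Summing the two pieces gives the claimed identity.

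The main obstacle is purely notational: the statement mixes row-wise divergence, a transpose on $A$, and a transpose inside the Frobenius contraction, and it is easy to off-by-one the indices. So I would be careful to fix the convention $(\nabla_{\bu}\cdot M)_i := \sum_j \partial_{u_j} M_{ij}$ at the outset, verify both terms against this convention, and then simply present the two-line calculation together with the product rule. No analytic difficulty beyond the chain rule is required.
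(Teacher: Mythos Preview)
Your proposal is correct and follows essentially the same approach as the paper: a direct componentwise expansion of $\nabla_{\bu}\cdot(A\bx)=\sum_i\partial_{u_i}\sum_j A_{ij}x_j$ followed by the product rule, then identifying the two resulting sums with $(\nabla_{\bu}\cdot A^\top)\cdot\bx$ and $A:\nabla_{\bu}\bx^\top$ under the stated conventions. The only difference is cosmetic---the paper writes out the sums longhand rather than in $\Sigma$-notation.
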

\begin{proof}
\begin{equation*}
    \begin{split}
        \nabla_{\bu} \cdot (A(\bu)\boldsymbol{x}(\bu)) 
        & = \frac{\rd(a_{11}x_1+\cdots+a_{1d}x_d)}{\rd u_1} + \cdots + \frac{\rd(a_{d1}x_1+\cdots+a_{dd}x_d)}{\rd u_d} \\
        & = x_1\frac{\rd a_{11}}{\rd u_1} + \cdots + x_d\frac{\rd a_{1d}}{\rd u_1} + \cdots + x_1\frac{\rd a_{d1}}{\rd u_d} + \cdots + x_d\frac{\rd a_{dd}}{\rd u_d} + \\ 
        & \quad + a_{11} \frac{\rd x_1}{\rd u_1} + \cdots + a_{1d} \frac{\rd x_d}{\rd u_1} + \cdots + a_{d1} \frac{\rd x_1}{\rd u_d} + \cdots + a_{dd} \frac{\rd x_d}{\rd u_d} \\
        & = (\nabla_{\bu} \cdot A(\bu)^\top) \cdot \boldsymbol{x}(\bu) + A(\bu) : \nabla_{\bu} \boldsymbol{x}(\bu)^\top.
    \end{split}
\end{equation*}
\end{proof}

Now we present the proof for Corollary \ref{logdet_coro}.
\begin{proof}
By Lemma \ref{div_matrix},
\begin{flalign*}
    & \nabla \cdot \left[\Pi(\bT(\bV,t)-\bT(\bV_*,t)) \left(\tbs_t(\bT(\bV,t)) - \tbs_t(\bT(\bV_*,t))\right)\right] \\
    = & [\nabla \cdot \Pi(\bT(\bV,t)-\bT(\bV_*,t))^\top] \cdot (\tbs_t(\bT(\bV,t)) - \tbs_t(\bT(\bV_*,t))) + \Pi(\bT(\bV,t)-\bT(\bV_*,t)) : \nabla \tbs_t(\bT(\bV,t))^\top \\
    = & [\nabla \cdot \Pi(\bT(\bV,t)-\bT(\bV_*,t))] \cdot (\tbs_t(\bT(\bV,t)) - \tbs_t(\bT(\bV_*,t))) + \Pi(\bT(\bV,t)-\bT(\bV_*,t)) : \nabla \tbs_t(\bT(\bV,t))^\top.
\end{flalign*}
Let $\bz := \bT(\bV,t)-\bT(\bV_*,t)$. Then
\begin{equation*}
    \Pi(\bz)_{ij} = 
    \begin{cases}{}
        -\frac{z_i z_j}{|\bz|^2}, \quad \text{for } i\not=j \,, \\
        1 - \frac{z_i^2}{|\bz|^2}, \quad \text{for } i=j \,,
    \end{cases}
    \implies \nabla \cdot \Pi(\bz) = -(d-1)\frac{\bz}{|\bz|^2} \,.
\end{equation*}

Therefore, the evolution for $\log |\det \nabla_\bV \bT(\bV,t)|$ is
\begin{flalign*}
    \frac{\rd}{\rd t} \log |\det \nabla_\bV \bT(\bV,t)| 
    & = - \int_{\mathbb{R}^{d}} \big\{ A(\bT(\bV,t)-\bT(\bV_*,t)) : \nabla \tbs_t(\bT(\bV,t))^\top -C_{\gamma}(d-1) \\
    & |\bT(\bV,t)-\bT(\bV_*,t)|^{\gamma} (\bT(\bV,t)-\bT(\bV_*,t)) \cdot \left(\tbs_t(\bT(\bV,t)) - \tbs_t(\bT(\bV_*,t))\right) \big\} f_0(\bV_*) \rd\bV_*.
\end{flalign*}
\end{proof}

\bibliographystyle{siam}
\bibliography{ref.bib}

\end{document}